\newtheorem{prop}{Proposition}
\newtheorem{thm}[prop]{Theorem}
\theoremstyle{definition}
\newtheorem{conj}[prop]{Conjecture}
\newtheorem{ques}[prop]{Question}
\theoremstyle{remark}
\newcommand{\mathmacro}[1]{#1\CustomizeMathJax{#1}}
\newcommand{\Q}{\mathbb{Q}}
\newcommand{\GL}{\operatorname{GL}}
\newcommand{\PSL}{\operatorname{PSL}}
\newcommand{\Irr}{\operatorname{Irr}}
\newcommand{\cd}{\mathrm{cd}}
\newcommand{\cc}{\mathrm{cc}}
\numberwithin{prop}{section}
\numberwithin{equation}{section}
\numberwithin{table}{section}
\numberwithin{figure}{section}
\title{On the irreducible character degrees of symmetric groups and their multiplicities}
\author{David A.\ Craven\\School of Mathematics, University of Birmingham,\\Edgbaston, Birmingham, B15 2TT, UK\\\texttt{d.a.craven@bham.ac.uk}}
\date{\today}
\begin{document}
\maketitle

\begin{abstract} We consider problems concerning the largest degrees of irreducible characters of symmetric groups, and the multiplicities of character degrees of symmetric groups. Using evidence from computer experiments, we posit several new conjectures or extensions of previous conjectures, and prove a number of results. One of these is that, if $n\geq 21$, then there are at least eight irreducible characters of $S_n$, all of which have the same degree, and which have irreducible restriction to $A_n$. We explore similar questions about unipotent degrees of $\GL_n(q)$. We also make some remarks about how the experiments here shed light on posited algorithms for finding the largest irreducible character degree of $S_n$.

\noindent \textbf{Keywords}: representations of finite groups, symmetric groups, character degrees.
\end{abstract}

%\setcounter{tocdepth}{3}
%\tableofcontents
\section{Introduction}

The multisets of degrees of the irreducible characters of the symmetric groups $S_n$ have been studied for over a century. Over the last 50 years or so there has been work on two particular aspects of this: the largest degrees of irreducible characters, and the multiplicities of irreducible character degrees. Here we will propose a few new conjectures and avenues of research, on the basis of experiments with symmetric groups of degree up to either 129 or 150 (depending on the question), along with some new results in this area.

We start with multiplicities. If $G$ is a finite group, write $m(G)$ for the maximal multiplicity of an irreducible character degree of $G$, and write $m(n)=m(S_n)$. In \cite{craven2008} it was proved that $m(n)\to \infty$ as $n\to\infty$, and together with \cite{moreto2007} and the Classification of Finite Simple Groups, this proves that $m(G)\to\infty$ as $|G|\to\infty$, i.e., $|G|$ is bounded above by a function of $m(G)$. A natural question is to ask for the growth rate of $m(G)$ in general, and $m(n)$ in particular. For groups of Lie type $G(q)$ there are fairly easy lower bounds, for example roughly $q/4$ for $G(q)=\PSL_2(q)$.

An explicit lower bound for the growth of $m(n)$, approximately $n^{1/7}$, was proved in \cite[Theorem E]{craven2010a}, but this is far from optimal, and the precise nature of this bound is a consequence of the methods of \cite{craven2008}. The theorem we prove here is the following, which has been used in \cite{martinezmadrid2024un} to find all finite groups $G$ for which $m(G)=2$ (the Baby Monster is the largest such group). This result is a slight specialization of Theorem \ref{thm:boundsfordegrees} below.

\begin{thm} If $n\geq 11$ then $m(n)\geq 4$. If $n\geq 17$ then $m(n)\geq 6$. If $n\geq 21$ then $m(n)\geq 8$. In each of these cases, it is possible to choose four, six and eight irreducible characters respectively that are of the same degree and also have irreducible restriction to $A_n$.
\end{thm}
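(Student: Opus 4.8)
\medskip
\noindent The plan is to translate the statement into a purely combinatorial question about partitions, and then to meet it by a combination of explicit families for large $n$ together with a direct computation in a bounded initial range. For the translation, write $\chi^\lambda$ for the irreducible character of $S_n$ labelled by a partition $\lambda$ of $n$, and recall that $\chi^\lambda\downarrow_{A_n}$ is irreducible precisely when $\lambda\neq\lambda'$, in which case $\chi^\lambda\downarrow_{A_n}=\chi^{\lambda'}\downarrow_{A_n}$; moreover $\chi^\lambda(1)=\chi^{\lambda'}(1)$ in all cases, since $\lambda$ and $\lambda'$ have the same multiset of hook lengths. Hence, to obtain $2t$ irreducible characters of $S_n$ of a common degree, each restricting irreducibly to $A_n$, it suffices to produce $t$ partitions of $n$ that are pairwise non-conjugate (that is, lie in $t$ distinct pairs $\{\lambda,\lambda'\}$), none self-conjugate, and all with the same value of $\chi^\lambda(1)$; one then takes these $t$ partitions together with their conjugates. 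So the three assertions amount to finding, respectively, $t=2$ such partitions for every $n\ge 11$, $t=3$ for every $n\ge 17$, and $t=4$ for every $n\ge 21$. Already at $n=6$ the characters $\chi^{(5,1)},\chi^{(2,1^4)},\chi^{(3,3)},\chi^{(2,2,2)}$ all have degree $5$ and restrict irreducibly to $A_6$; the task is to make such a configuration available uniformly, with enough pairs, from the stated thresholds onward.

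For $n$ from the relevant threshold up to the range of the computer experiments reported here, I would locate the partitions and verify the degree coincidences and non-self-conjugacy directly from the hook length formula (expressed, say, through first-column hook lengths, in which form a coincidence of two degrees for a fixed $n$ becomes a manageable numerical identity between products of differences and of factorials). For the remaining, unboundedly large $n$ one needs explicit families of partitions, parametrised by $n$, realising the coincidence. The point to bear in mind is that the cheap ways of propagating a single coincidence to infinitely many $n$ --- prepending a long first row or column, adjoining a block of equal parts, inflating by a fixed partition --- are rigid: a short calculation with the hook length formula shows, for instance, that prepending a common long first row to two partitions of equal degree keeps the degrees equal only if the two partitions already coincide. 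The families must therefore be genuinely structural, with the partitions themselves growing in combinatorial complexity (for instance in their number of parts) as $n$ grows, so that the degree equalities become hook length identities to be proved once and for all.

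The main obstacle, as I see it, is exactly the construction of these families: one needs them to be structural, hence provably valid for all large $n$; to contain as many as four pairwise non-conjugate members; to avoid self-conjugate partitions throughout (this being what guarantees irreducible restriction to $A_n$); and to reach down to the sharp thresholds $n=11$, $17$, $21$ --- which in practice will mean splitting the large-$n$ range into a few residue classes or intervals and treating the smallest cases individually. Once the families are fixed, the outstanding verifications --- that each member is an honest partition of $n$ for every $n$ in range, that members are pairwise non-conjugate and none is self-conjugate, and that the claimed degree equalities hold --- are routine hook length bookkeeping. I would expect the statement to be the $t\le 4$ case of whatever uniform-in-$t$ construction underlies Theorem~\ref{thm:boundsfordegrees}.
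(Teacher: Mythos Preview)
Your overall strategy---combine explicit infinite families for large $n$ with direct computation below a threshold---is exactly what the paper does, and your reduction to pairs of non-self-conjugate, non-conjugate partitions is correct. The gap is the construction itself, and your diagnosis of the obstacle is slightly off in a way that hides the actual mechanism. You correctly rule out prepending a single long row and then predict that the families must ``grow in combinatorial complexity (for instance in their number of parts)''. In fact the device the paper uses---\emph{periodic clusters} from \cite{craven2008}---does the opposite: one finds a finite set of partitions sharing the \emph{same multiset of hook lengths} (not merely the same product), with the property that adding the same integer to each of the first $p$ parts, for a fixed period $p>1$, preserves this equality of hook multisets. The number of parts stays bounded; only the first $p$ rows lengthen. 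There is a simple combinatorial test for periodicity (the first $p$ rows must have the same multiset of pairwise ``gaps'' $(\lambda_i-\lambda_j)-(i-j)$ across all partitions in the cluster, and deleting those rows must again yield a cluster), so once suitable seed clusters are found the verification is mechanical. Working with equal hook \emph{multisets} rather than equal hook \emph{products} is precisely what makes this propagation tractable; your framing in terms of degree equalities alone is more general but has no comparable periodicity criterion.

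Concretely: for $m(n)\ge 4$, three period-$3$ clusters of two non-conjugate partitions each (hence four with conjugates) cover all residues modulo $3$ from $n=22$ onward, and $n\le 21$ is handled by Table~\ref{t:maxmult}. For $m(n)\ge 8$, nine period-$9$ clusters of order $8$ (four partitions listed, plus their conjugates) cover all residues modulo $9$ from $n=206$; further clusters of other periods bring this down to $n\ge 182$; Table~\ref{t:maxmult} covers $n\le 129$; and the range $130\le n\le 181$ is finished by direct search. There is no separate $t=3$ construction: the bound $m(n)\ge 6$ is simply a consequence of $m(n)\ge 8$. Periodic clusters automatically avoid self-conjugate partitions once extended, so the irreducible-restriction-to-$A_n$ claim only needs checking in the finite computational range.
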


For small values of $n$ it is possible to prove this theorem by simply finding a number of characters of the same degree. Above this we need a blend of techniques from \cite{craven2008} and computer methods to complete the proof. Using a computer, in Table \ref{t:maxmult} we give the value of $m(n)$ for $n\leq 129$. For $n\geq 206$ one can prove the result by explicitly giving eight characters with the same degree. Between these bounds one needs to prove the result using a computer; this yields explicit characters but there is no generic nature to them, unlike the situation for $n\geq 206$.

The irreducible characters of symmetric groups are described combinatorially, see Section \ref{sec:sn} below for more information and background. The unipotent characters of the general linear groups, a specific, important subset of the irreducible characters of $\GL_n(q)$ that are defined independently of $q$, are parametrized by partitions, as for the irreducible characters of symmetric groups. Their degrees are combinatorially defined in a similar manner to the hook formula for symmetric groups (see Section \ref{sec:gln} below or \cite[p.465]{carterfinite} for an alternative version). It is reasonable to suspect that one might be able to say something about multiplicities of character degrees for general linear groups. As we see in Section \ref{sec:gln}, the degrees of the characters associated to a partition and its conjugate, while equal for $S_n$, are generally not for $\GL_n(q)$, so it is not even clear that there are two unipotent characters with the same degree for $\GL_n(q)$. To the author's knowledge there is no established theory about this in the literature, so this paper starts the investigation of this subject, with the following contribution.

\setcounter{section}{3}
\setcounter{prop}{6}

\begin{prop} If $\lambda$ is a partition of $n$, let $\chi_\lambda$ denote the corresponding unipotent character of $\GL_n(q)$. If $n=15,16,19,20,21,22$ or $n\geq 24$ then there is a partition $\lambda$ such that the generic degrees of $\lambda$ and the conjugate partition $\lambda'$ are the same. Otherwise there is no such partition.
\end{prop}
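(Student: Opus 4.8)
The plan is to reduce the statement to a combinatorial condition on partitions and then to split the work into a finite check and an explicit construction. Write the generic degree of $\chi_\lambda$ as $d_\lambda(q)=q^{n(\lambda)}R_\lambda(q)$, where $n(\lambda)=\sum_i (i-1)\lambda_i$ and $R_\lambda(q)=\prod_{i=1}^n(q^i-1)\big/\prod_{x\in\lambda}(q^{h(x)}-1)$, the denominator running over the hook lengths $h(x)$ of the cells of $\lambda$ (see Section \ref{sec:gln}). Conjugating a Young diagram transposes every hook, so $\lambda$ and $\lambda'$ have the same multiset of hook lengths, whence $R_\lambda=R_{\lambda'}$; since $R_\lambda$ is a nonzero rational function, $d_\lambda/d_{\lambda'}=q^{\,n(\lambda)-n(\lambda')}$, so $d_\lambda=d_{\lambda'}$ (as polynomials in $q$) if and only if $n(\lambda)=n(\lambda')$. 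Writing $c(i,j)=j-i$ for the content of a cell, a short computation gives $n(\lambda')-n(\lambda)=\sum_{(i,j)\in\lambda}c(i,j)$; thus $\lambda$ and $\lambda'$ have equal generic degree exactly when the content sum of $\lambda$ vanishes. A self-conjugate partition trivially has content sum zero, but then $\chi_\lambda=\chi_{\lambda'}$ is a single character, so the real task is to show that a non-self-conjugate partition of $n$ with content sum zero exists precisely when $n\in\{15,16,19,20,21,22\}$ or $n\ge24$.

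For $n\le14$ and for $n\in\{17,18,23\}$ this is a finite check: enumerate the partitions of $n$, compute the content sum of each, and confirm that every one with content sum zero is self-conjugate. For the remaining (positive) values one first records, for each $n$ with $15\le n\le27$ and $n\notin\{17,18,23\}$, an explicit non-self-conjugate partition of content sum zero: for instance $(6,3,2,2,2)$ for $n=15$ and $(6,3,3,2,2)$ for $n=16$, and similar partitions (easily produced by hand or by computer) for $n\in\{19,20,21,22,24,25,26,27\}$.

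All $n\ge28$ are then handled uniformly by the following growth operation. Suppose $\lambda$ is a non-self-conjugate partition with content sum zero, and let $c$ be any integer with $c>\max(\lambda_1,\lambda_1')$. Let $\mu$ be the partition obtained from $\lambda$ by adjoining a new first row and a new first column, each of length $c$ — equivalently, by placing $\lambda$ inside a hook whose arm and leg both have length $c-1$. The $2c-1$ adjoined cells form a transpose-symmetric set, so their contents sum to zero; the other cells of $\mu$ are those of $\lambda$ translated along the main diagonal and so have unchanged contents; hence $\mu$ again has content sum zero. Furthermore $\mu'$ arises from $\lambda'$ by exactly the same operation, so $\mu=\mu'$ would force $\lambda=\lambda'$, and therefore $\mu$ is non-self-conjugate. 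Applying this with $\lambda=(6,3,2,2,2)$, for which $\max(\lambda_1,\lambda_1')=6$ so $c$ ranges over all integers $\ge7$, produces such a $\mu$ of size $14+2c$ for every even $n\ge28$; applying it with $\lambda=(6,3,3,2,2)$ produces one of size $15+2c$ for every odd $n\ge29$. Together with the explicit examples this covers every $n\ge24$, and since none of the constructions ever yields size $17$, $18$ or $23$ it is consistent with the negative part.

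The reduction and the verification that the growth operation does what is claimed are routine. The substance of the proof lies elsewhere: first, in producing the sporadic examples for $15\le n\le27$ and — the genuinely finite but least transparent step — in confirming, by running over all partitions, that for $n\le14$ and $n\in\{17,18,23\}$ no non-self-conjugate partition has content sum zero (there seems to be no structural reason, only an arithmetic one, why small partitions of content sum zero must be self-conjugate); and second, in choosing the seeds so that the growth operation, which increases the size only by an odd amount and by at least $2\max(\lambda_1,\lambda_1')+1$, leaves no gap above $n=27$. It is the interplay of these two ingredients that produces the somewhat irregular list of exceptional values in the statement.
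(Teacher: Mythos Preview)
Your proof is correct and follows essentially the same strategy as the paper's: reduce to the combinatorial condition $a(\lambda)=a(\lambda')$, handle the excluded values by a finite computer search, list explicit sporadic examples for the small positive values, and cover all $n\ge 28$ by a uniform growth construction. Your content-sum reformulation is equivalent to the paper's $a$-function condition, and your ``wrap $\lambda$ in a symmetric hook'' operation is literally the paper's ``add $m$ to the first part and append $m$ ones'' construction viewed one step back: your seeds $(6,3,2,2,2)$ and $(6,3,3,2,2)$ are exactly the paper's seeds $(7^2,4,3^3,1)$ and $(7^2,4^2,3^2,1)$ with their first row and column stripped off, so the two constructions produce the identical families of partitions.
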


As $n$ increases the number of partitions $\lambda$ with this property seems to grow quickly (see Table \ref{t:glnpartitionssamedegree}), but the author can envisage no obvious proof of this.

\medskip

Connected to the maximal multiplicity of a character degree we have the average multiplicity of a character degree. Write $k(G)$ for the number of conjugacy classes of a finite group $G$, or equivalently the number of irreducible characters of $G$. Let $\cd(G)$ denote the set of irreducible character degrees of $G$, and $\cc(G)$ denote the set of conjugacy class sizes of $G$, the `dual notion'. The average character degree multiplicity is $k(G)/|\cd(G)|$, and the average conjugacy class size multiplicity is $k(G)/|\cc(G)|$. We compute both of these for symmetric groups $S_n$ up to $n=129$; a pattern emerges for $k(G)/|\cd(G)|$ (see Figure \ref{fig:avmult}) but the asymptotics remain unclear at this stage. For conjugacy classes, however, data up to $S_{85}$ is already enough for a convincing conjecture to emerge (see Figure \ref{fig:avmultcc}). The following two conjectures are considered in more detail, with current progress on them, in Section \ref{sec:mults}, and the concepts have been considered before, for example in \cite{jaikin2005,moreto2007}.

\setcounter{prop}{2}

\begin{conj} There exists a function $f:\Q\to\Q$ such that, if $G$ is a finite group with no alternating composition factor, then $|G|\leq f(k(G)/|\cd(G)|)$.
\end{conj}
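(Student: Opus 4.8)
The natural line of attack is a reduction to (non-alternating) simple groups via the Classification, along the lines of the known proof that $|G|$ is bounded in terms of $m(G)$, but with one essential new difficulty: since the maximum of the degree multiplicities is always at least the average, a bound on $k(G)/|\cd(G)|$ gives no bound on $m(G)$, so that result cannot simply be quoted, and throughout one must \emph{bound $|\cd(G)|$ from above} rather than merely bound multiplicities from below. Write $a(G)=k(G)/|\cd(G)|$. By Landau's theorem that only finitely many groups have a given number of conjugacy classes, it suffices to bound $k(G)$ (not $|G|$ directly) in terms of $a(G)$; equivalently, one must prove that $a(G)\to\infty$ as $|G|\to\infty$ over the class of groups with no alternating composition factor.

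The plan is to argue by induction on $|G|$ along a chief series, using three ingredients. The first is the elementary divisor bound $|\cd(G)|\leq d(|G|)$ (every character degree divides $|G|$), which for a $p$-group sharpens to $|\cd(P)|\leq\tfrac12\log_p|P|+1$. The second is the direct-product inequality: if $G=A\times B$ then $k(G)=k(A)k(B)$ and $|\cd(G)|\leq|\cd(A)||\cd(B)|$, so $a(G)\geq a(A)a(B)$, and in particular $a(S^t)\geq a(S)^t$ for $S$ simple. The third is Clifford theory applied to a minimal normal subgroup $N$, used to compare $k$ and $|\cd|$ for $G$, $N$ and $G/N$. When $N$ is elementary abelian, the characters of $G$ not containing $N$ in their kernel enlarge $k(G)$ by considerably more than they enlarge $|\cd(G)|$, so $a(G)$ is controlled below in terms of $a(G/N)$; here the delicate case is that of a $p$-group or solvable group with many distinct character degrees, where one needs a structural lower bound for $k(P)$ in terms of $|\cd(P)|$ of the kind studied in \cite{jaikin2005,moreto2007}. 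When $N\cong S^t$ with $S$ nonabelian simple, the hypothesis that $G$ has no alternating composition factor forces $S$ to be non-alternating and restricts the permutation action of $G$ on the $t$ direct factors of $N$; combining $a(S^t)\geq a(S)^t$ with $a(G)\geq 1$ and the Clifford-theoretic comparison then reduces everything to the simple groups $S$, provided one knows that $a(S)\geq 1+\delta_0$ for an absolute constant $\delta_0>0$ and that $a(S)\to\infty$ as $|S|\to\infty$.

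So the crux is the statement: for every non-alternating nonabelian simple group $S$ one has $a(S)\geq 1+\delta_0$ and $|S|\leq f_0(a(S))$. For the sporadic groups and the Tits group this is a finite check from the character tables. For groups of Lie type of bounded rank, letting $q\to\infty$, both $k(S)$ and $|\cd(S)|$ are quasi-polynomials in $q$ with $\deg_q k(S)>\deg_q|\cd(S)|$ — already $k(\PSL_2(q))\sim q$ while $|\cd(\PSL_2(q))|$ is bounded — so $a(S)\to\infty$. The main obstacle, and the reason this can only be a conjecture at present, is the classical groups of unbounded rank: $\PSL_n(q)$, $\PSp_{2n}(q)$, and so on with $n\to\infty$ and $q$ fixed. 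There $k(S)$ grows exponentially in $n$, and one needs the number $|\cd(S)|$ of distinct character degrees to grow strictly more slowly; via Lusztig's classification of $\Irr(\GL_n(q))$ — whose unipotent characters alone already realize close to $p(n)$ distinct degrees — this is essentially the $\GL_n(q)$-analogue of the open question about the asymptotics of $p(n)/|\cd(S_n)|$ discussed in this paper, and the crude divisor bound $|\cd(S)|\leq d(|S|)$ is far too weak to decide it.
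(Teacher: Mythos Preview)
The statement you are addressing is a \emph{conjecture} in the paper, not a theorem: the paper offers no proof, only a list of partial cases (citing $p$-groups and soluble groups with trivial Frattini subgroup from Moret\'o, Lie-type groups of bounded rank from Moret\'o, and \emph{all} simple groups of Lie type from Liebeck--Shalev). Your proposal is likewise not a proof but a sketch of a strategy that you yourself declare incomplete, so there is nothing to certify as ``correct'' here; the useful question is whether your diagnosis of the obstruction matches the paper's.

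It does not, and this is worth correcting. You locate the crux at the simple groups of Lie type of unbounded rank with $q$ fixed, claiming that controlling $|\cd(\GL_n(q))|$ against $k(\GL_n(q))$ is essentially as open as the $p(n)/|\cd(S_n)|$ question. But the paper explicitly records that the conjecture \emph{is known} for simple groups of Lie type, by \cite[Corollary 1.6]{liebeckshalev2005}; so the simple-group input you say is missing is in fact available. What is genuinely open --- and what your sketch glosses over with phrases like ``Clifford-theoretic comparison'' and ``controlled below in terms of $a(G/N)$'' --- is the \emph{reduction} from arbitrary $G$ (with no alternating composition factor) to its simple composition factors. Your inequalities $a(A\times B)\geq a(A)a(B)$ and $a(S^t)\geq a(S)^t$ are fine, but the passage through a normal subgroup $N\trianglelefteq G$ is not: Clifford theory gives no clean inequality of the shape $a(G)\geq c\cdot a(N)\cdot a(G/N)$, because extending and inducing characters over $N$ can collapse many degrees of $N$ into few degrees of $G$ or vice versa, and the ``restriction on the permutation action on the $t$ factors'' you invoke does not by itself bound anything useful (solvable transitive subgroups of $S_t$ can be large). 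In short, the hard step is not the one you flagged; it is the inductive step along a chief series, and that remains open.
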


\begin{conj} There exists a function $f:\Q\to\Q$ such that, if $G$ is a finite group then $|G|\leq f(k(G)/|\cc(G)|)$.
\end{conj}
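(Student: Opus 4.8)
The plan is to follow the template by which $|G|$ was shown to be bounded in terms of $m(G)$ (via \cite{craven2008,moreto2007} and the Classification of Finite Simple Groups): reduce to simple groups, and then reassemble an arbitrary $G$ from its composition factors. Two elementary observations make the reassembly tractable in principle. For a direct product one has $k(G_1\times G_2)=k(G_1)k(G_2)$ while $|\cc(G_1\times G_2)|\le|\cc(G_1)|\,|\cc(G_2)|$, so the ratio $k(G)/|\cc(G)|$ is at least multiplicative over direct factors; and inflation gives $k(G)\ge k(G/N)$ for every normal subgroup $N$. Hence it should suffice to establish (i) that $k(S)/|\cc(S)|\to\infty$ along every infinite family of non-abelian simple groups $S$, together with (ii) a reduction theorem to the effect that a finite group with $k(G)/|\cc(G)|\le B$ is built from boundedly many simple groups of order bounded by a function of $B$; these combine to bound $|G|$ exactly as in the $m(G)$ case.

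For (i), the sporadic groups and the finitely many small exceptions are irrelevant asymptotically. For a group of Lie type $G(q)$ of bounded rank, every centralizer order is the value at $q$ of one of a bounded list of polynomials, so $|\cc(G(q))|$ is bounded in terms of the rank, whereas $k(G(q))$ is a polynomial in $q$ of degree equal to the rank; thus the ratio tends to infinity as $q\to\infty$, and the remaining regime (rank $\to\infty$) reduces to a counting estimate of the type discussed next for alternating groups. For $A_n$, the conjugacy classes of $S_n$ are indexed by partitions $\lambda\vdash n$, the class of cycle type $\lambda$ has size $n!/z_\lambda$ with $z_\lambda=\prod_i i^{m_i}m_i!$, and restriction to $A_n$ changes each size by a factor of at most $2$; so one must prove that $z_\lambda$ takes only $o(p(n))$ distinct values as $\lambda$ runs over the partitions of $n$. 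This is precisely where the constructions of the present paper should be brought to bear, ``multiplied up'': instead of exhibiting many partitions with one prescribed centralizer order, one shows that almost every attainable value of $z_\lambda$ is attained with large multiplicity, in the same spirit as the proof that $m(n)\ge 8$.

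The main obstacle is (ii), and within it the case of $p$-groups and, more generally, nilpotent groups. For such a group every conjugacy class size is a prime power at most $|G|$, so $|\cc(G)|\le 1+\log_2|G|$, and the conjecture forces one to show that $k(G)$ grows faster than $\log|G|$ as $|G|\to\infty$ through $p$-groups --- a statement parallel to, and apparently at least as delicate as, the facts about $\cd(G)$ and $|\cd(G)|$ for $p$-groups due to Jaikin-Zapirain \cite{jaikin2005}. Granting such an estimate, one would induct on $|G|$ through a minimal normal subgroup $N$, using the factorisation $|x^G|=|\bar x^{G/N}|\cdot|x^G\cap xN|$: when $N$ is a product of non-abelian simple groups the induction is controlled by (i) together with the multiplicativity above, while when $N$ is elementary abelian the dangerous configurations are exactly those in which the local factors $|x^G\cap xN|$ collapse onto few values while $k(G)$ keeps increasing, which is the nilpotent phenomenon just described. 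Propagating the $p$-group estimate first through the soluble layers and then through the non-soluble layers, so as to bound the composition length and the orders of the composition factors, would complete the proof; and it is here, in this propagation, that the real difficulty is expected to lie.
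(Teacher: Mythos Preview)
The statement in question is a \emph{conjecture} in the paper, not a theorem: the paper offers no proof, only numerical evidence (the graph of $k(S_n)/|\cc(S_n)|$) and a remark that the conjecture is already known for $p$-groups and for soluble groups with trivial Frattini subgroup (citing Moret\'o), and for Lie-type groups of bounded rank. Your proposal is therefore not competing with a proof in the paper; it is an attempt to settle an open problem, and it should be read against that bar.

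Read that way, your submission is a strategy sketch rather than a proof, and it leaves every hard step unresolved. Two of the gaps are substantial. First, for (i) in the alternating/symmetric case you say one ``must prove that $z_\lambda$ takes only $o(p(n))$ distinct values'', and then gesture at the cluster constructions of the present paper; but those constructions produce many partitions with the same \emph{hook multiset}, not the same \emph{centralizer order} $z_\lambda=\prod_i i^{m_i}m_i!$, and you give no argument for the $o(p(n))$ estimate itself. The paper offers only computational evidence here, so this is genuinely open. Second, your reduction (ii) is not carried out: the direct-product and inflation inequalities you quote are correct, but they do not by themselves yield a bound on the composition length or on the orders of the composition factors, and your induction through a minimal normal subgroup via $|x^G|=|\bar x^{G/N}|\cdot|x^G\cap xN|$ is descriptive rather than an argument---you yourself say the ``real difficulty is expected to lie'' in this propagation.

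One point is miscalibrated rather than missing. You identify the $p$-group case as the ``main obstacle'' and cast it as needing $k(G)$ to outpace $\log|G|$; but the paper records that the conjecture is already known for $p$-groups (and more generally for soluble groups with trivial Frattini subgroup) by results of Moret\'o. So that particular worry is not where the difficulty is. What remains genuinely open, both in the paper and in your outline, is the combination of the symmetric/alternating case and a reduction theorem strong enough to pass from simple groups to arbitrary finite groups.
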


\medskip

We also consider the largest degrees of $S_n$. Let $b_1(n)$ denote the largest character degree of $S_n$, $b_2(n)$ the second largest, $b_3(n)$ the third largest and so on. In \cite{hhn2016}, it was proved that, for $n\geq 7$, we have that
\[ \sum_{\substack{\chi\in\Irr(S_n)\\\chi(1)<b_1(n)}} \chi(1)^2>2b_1(n)^2.\]

Experimental data about the submaximal degrees $b_2(n)$, $b_3(n)$, and so on, collated in Section \ref{sec:chardegs}, gives a solid basis for believing the following conjecture.
\setcounter{section}{4}
\setcounter{prop}{0}
\begin{conj}  Let $n$ be an integer with $n\geq 5$, $n\neq 6,7$.
\begin{enumerate}
\item If $n\neq 11$ then $b_1(n)^2\leq b_2(n)^2+b_3(n)^2$.
\item If $n\neq 6,7,11,16,38$, and $b_2(n)$ or $b_3(n)$ is achieved by two or more irreducible characters of $S_n$, then
\[ \sum_{\substack{\chi\in\Irr(S_n)\\\chi(1)\in \{b_2(n),b_3(n)\}}}\chi(1)^2>2b_1(n)^2.\]
\item If $b_2(n)$ and $b_3(n)$ are both achieved by a unique irreducible character of $S_n$ then
\[ \sum_{\substack{\chi\in\Irr(S_n)\\\chi(1)\in \{b_2(n),b_3(n),b_4(n)\}}}\chi(1)^2>2b_1(n)^2.\]
\item If $n\geq 62$ then $2b_1(n)^2\leq b_2(n)^2+b_3(n)^2+b_4(n)^2$.
\end{enumerate}
\end{conj}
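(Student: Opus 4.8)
The plan is to separate the statement into an asymptotic range, where everything reduces to lower bounds on the submaximal degrees $b_2(n),b_3(n),b_4(n)$ relative to $b_1(n)$, and a finite range handled by direct computation, with the exceptional $n$ pinned down along the way. Suppose one knows $b_k(n)\ge c_k(n)\,b_1(n)$ for $k=2,3,4$ with $c_k(n)\to 1$ (weaker effective bounds, with $c_k(n)$ eventually above a suitable absolute constant, would also suffice). Then part (i) is immediate from $b_1(n)^2\le b_2(n)^2+b_3(n)^2$ and part (iv) from $2b_1(n)^2\le b_2(n)^2+b_3(n)^2+b_4(n)^2$, both of which hold once the $c_k(n)$ are close enough to $1$. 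For parts (ii) and (iii) one adds two ingredients: conjugate partitions afford characters of equal degree, so any $b_i(n)$ whose extremal partition is not self-conjugate is attained with multiplicity at least two; and the inequality of \cite{hhn2016}. In case (ii), if $b_2(n)$ is attained at least twice then $\sum_{\chi(1)\in\{b_2,b_3\}}\chi(1)^2\ge 2b_2(n)^2+b_3(n)^2$, and $2b_2(n)^2+b_3(n)^2>2b_1(n)^2$ amounts to $b_3(n)^2>2\bigl(b_1(n)^2-b_2(n)^2\bigr)$, which is clear once $c_2(n),c_3(n)$ are close to $1$; the subcase with $b_3(n)$ attained twice, and case (iii) with the triple $b_2(n),b_3(n),b_4(n)$, are handled identically. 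Hence, granted the degree bounds, all four parts hold for $n$ beyond some threshold $n_0$, and (iii) in particular has no exception.

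The heart of the matter, and where I expect the work to lie, is proving those lower bounds with honest constants. A cheap version comes from branching: if $f^\lambda=b_1(n)$, then restricting to $S_{n-1}$ and inducing back up, using that a partition of $n$ has at most $\sqrt{2n}$ distinct parts (hence $O(\sqrt n)$ removable and addable corners), gives $b_2(n)\ge\bigl(\tfrac12-o(1)\bigr)b_1(n)$---but this yields only $b_2(n)^2\ge b_1(n)^2/4$, far short of what (i) and (iv) need. To do better one analyses a single \emph{corner move}: delete a removable corner of $\lambda$ and adjoin a nearby addable one. By the hook-length formula this multiplies $f^\lambda$ by a ratio of hook-length products, the only hooks that change lying along the rows and columns between the two corners; these change in pairs---one raised by $1$, one lowered by $1$---so the ratio telescopes to an expression of the shape $\prod_h\bigl(1-O(1)/(h(h-c))\bigr)$ over the affected hook lengths $h$, the implied constant depending only on the distance between the corners. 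Since $\sum 1/h^2$ converges, such a ratio is bounded below by an absolute positive constant, and it approaches $1$ if the move is sited where every affected hook is large---available because the extremal $\lambda$ closely approximates the Vershik--Kerov--Logan--Shepp limit shape $\sqrt n\,\Omega$, leaving plenty of "bulk" on its boundary. Carrying out several disjoint such moves at once yields $b_2(n),b_3(n),b_4(n)$ simultaneously, and incidentally supplies the multiplicity used in (ii). The obstacle is to make this \emph{effective}: the limit-shape asymptotics control $\log b_1(n)$ only to within $o(\sqrt n)$, far too coarse to bound $b_k(n)/b_1(n)$, so one needs a hands-on hook-length estimate with usable constants before the threshold $n_0$ becomes explicit.

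For the remaining range $5\le n<n_0$ one reads $b_1(n),\dots,b_4(n)$ and the multiplicities of $b_2(n),b_3(n)$ off the hook-length formula---precisely the data collated in Section~\ref{sec:chardegs} for $n$ up to $150$---so in practice $n_0$ need only be nudged past the computed range, if at all. This identifies the exceptions: in (i) exactly $n\in\{6,7,11\}$; in (ii) exactly $n\in\{6,7,11,16,38\}$, alongside the genuine hypothesis that $b_2(n)$ or $b_3(n)$ is attained at least twice; none in (iii); and the threshold in (iv) sharp at $62$. The result is recorded as a conjecture rather than a theorem precisely because the effective degree bound $b_k(n)\ge c_k(n)\,b_1(n)$ with $c_k(n)$ provably large enough is not yet in hand.
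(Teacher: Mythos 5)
This statement is \emph{Conjecture}~4.1, not a theorem, and the paper supplies no proof of it: the paper's evidence is a machine check up to $n=100$ (with $b_4(n)$ computed separately), an identification of the finitely many exceptional $n$, and a heuristic appeal to Conjecture~4.4 ($b_m(n)/b_1(n)\to 1$), which is itself open. You correctly recognise this status, and your overall architecture---reduce all four parts to lower bounds $b_k(n)\ge c_k(n)b_1(n)$ with $c_k(n)\to 1$, handle small $n$ by computation, and concede that the effective ratio bounds are the missing ingredient---is essentially the paper's own reasoning made explicit. Your derivation of parts (ii) and (iii) from the multiplicity hypothesis plus the ratio bounds is also sound; the citation of \cite{hhn2016} there is decorative, since the algebra you actually run uses only ``attained twice'' and $c_k\approx 1$.

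Where you diverge from the paper is in how you would attack the ratio conjecture itself. The paper points at Vershik--Kerov: \cite[Theorems 1 and 2]{vershikkerov1985} show that $b_1(n)$ is bounded well below $\sqrt{n!}$ while the large-degree characters account for $n!-o(n!)$, so there must be many large degrees; the stated obstruction is that one does not know the \emph{multiplicity} of each $b_i(n)$ is bounded. You instead propose a direct hook-length argument: modify the extremal $\lambda$ by a corner move and estimate the resulting degree ratio. That is a reasonable alternative strategy, but two of your intermediate claims are off. First, the ``cheap'' branching bound you assert, $b_2(n)\ge(\tfrac12-o(1))b_1(n)$, does not follow from the argument you sketch: if $\lambda$ has $d=O(\sqrt n)$ removable corners, restriction gives some $\mu\vdash n-1$ with $f^\mu\ge b_1(n)/d$, and adding a different addable corner to $\mu$ gives $\nu\ne\lambda$ with $f^\nu\ge f^\mu$, hence only $b_2(n)\ge b_1(n)/O(\sqrt n)$. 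Second, the telescoping claim that the changed hooks pair off as one raised and one lowered, giving a product of factors $1-O(1)/(h(h-c))$, is not correct as stated: a corner move at $(r_1,c_1)\to(r_2,c_2)$ alters hooks along rows $r_1,r_2$ and columns $c_1,c_2$ (not ``between'' the corners), the raises and lowers need not pair, and one must separately account for the hook at the moved corner itself; the bound you want requires a more careful bookkeeping and, as you say, quantitative control on the shape of the extremal $\lambda$, which is precisely what is unavailable. So the route you sketch has the same gap as the paper's suggested route, just located differently. As a final small point, the full $b_4(n)$ data is only recorded up to $n=100$, not $n=150$.
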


It appears as though the ratio $b_2(n)/b_1(n)$ tends to $1$ as $n$ grows, so that there is no single large degree for symmetric groups, see Figure \ref{fig:b3b1}. We posit that, in fact, this is true for any fixed $m$, so $b_m(n)/b_1(n)\to 1$ as $n\to\infty$.

\begin{conj} For any $m\geq 2$, the ratio $b_m(n)/b_1(n)\to 1$ as $n\to\infty$.
\end{conj}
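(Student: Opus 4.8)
The plan would be to recast the statement probabilistically and then reduce it to an explicit construction. Write $\chi^\lambda$ for the irreducible character of $S_n$ labelled by the partition $\lambda\vdash n$, so that $\sum_{\lambda\vdash n}\chi^\lambda(1)^2=n!$ and $\chi^\lambda(1)^2/n!$ is the Plancherel probability of $\lambda$; then $b_m(n)/b_1(n)\to 1$ is the assertion that the $m$-th largest value of $\lambda\mapsto\chi^\lambda(1)$ is $(1+o(1))$ times the largest. Since $b_m(n)\le b_1(n)$ is immediate, it is enough to produce, for every large $n$, partitions $\lambda^{(1)}_n,\dots,\lambda^{(m)}_n$ of $n$ with pairwise distinct degrees and $\log b_1(n)-\log\chi^{\lambda^{(i)}_n}(1)=o(1)$ for each $i$; equivalently, one must show that the degree-maximising partition $\mu_n$ of $S_n$ has at least $m-1$ `companions' whose hook-length products exceed the minimal one only by a factor $1+o(1)$.

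The engine for this should be the limit-shape theory. By the Logan--Shepp--Vershik--Kerov theorem, $\mu_n$ has rescaled profile uniformly $o(1)$-close to the Vershik--Kerov curve $\Omega$, which is symmetric about the diagonal and in its interior has slope tending to $-1$; there $\mu_n$ locally resembles a strict staircase. Via the hook-length formula, $\log\chi^\lambda(1)=\tfrac12\log n!-\mathcal E_n(\lambda)$ where $\mathcal E_n\ge 0$ is minimised (by a margin of order $\sqrt n$) at $\mu_n$ and, on the lattice of partitions near $\mu_n$, should be to second order a positive quadratic form $\mathcal Q_n$; the `companions' would be manufactured by perturbing $\mu_n$ in directions along which $\mathcal Q_n$ grows as slowly as possible. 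The two natural families of perturbations are (a) reflection through the diagonal, $\mu_n\mapsto\mu_n'$, which costs nothing whenever $\mu_n\ne\mu_n'$, and (b) coordinated `box slides' along the long near-linear middle portion of the profile, performed symmetrically on the two sides of the diagonal to keep the perturbation nearly self-conjugate. For each such move one estimates the change in $\sum_c\log h(c)$ directly from the hook-length formula, the whole point being to arrange that the leading contributions cancel and only an $o(1)$ residue survives.

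The hard part --- and the reason the statement is still a conjecture --- is exactly this last estimate. A single box move near $\mu_n$, even one confined to the flat part of the profile, alters $\sum_c\log h(c)$ by $\Theta(1)$, not $o(1)$: the arms and legs involved have length $\Theta(\sqrt n)$, and over such an arm $\sum 1/h=\Theta(\log n)$ while $\sum 1/h^2=\Theta(1)$. Thus elementary perturbations yield companions with $\chi^\lambda(1)\asymp b_1(n)$ but not $\chi^\lambda(1)=(1+o(1))b_1(n)$, which is useless here; one must instead move $k=k(n)\to\infty$ boxes in a configuration that cancels these $\Theta(1)$ terms down to $o(1)$, and do so for $m-1$ genuinely independent perturbations at once. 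An equivalent way to see the difficulty: it would suffice to pin down $b_1(n)$ itself to within a factor $1+o(1)$ (an additive $o(1)$ in the logarithm) together with a matching lower bound from an explicit family --- but the limit-shape and fluctuation machinery currently controls $\log b_1(n)$ only up to errors of size $o(\sqrt n)$, which is far too coarse. Making these cancellations rigorous and uniform in $n$, or equivalently sharpening the asymptotics of $b_1(n)$ to this precision, is where I expect the real work to lie; the remaining point --- that a growing pool of near-maximisers does in fact realise at least $m$ distinct degrees rather than collapsing onto a single repeated value --- should be comparatively routine once the rest is in place.
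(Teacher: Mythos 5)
The statement you address is a \emph{conjecture} in the paper, not a theorem, and the paper contains no proof of it; the author writes explicitly that it ``should be a consequence of there being many `large-degree' characters clustered around the maximal degree, but as far as the author is aware this result is not known.'' So there is no proof to measure your attempt against, and you are right not to present your write-up as one.

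Your diagnosis of the obstruction is nonetheless accurate and in one respect sharper than the paper's own informal discussion. The paper's heuristic routes through Vershik--Kerov \cite{vershikkerov1985}: Theorem~2 there places $(1-o(1))n!$ of the Plancherel mass on characters of ``large'' degree, Theorem~1 keeps $b_1(n)$ well below $\sqrt{n!}$, and the paper then says the missing ingredient is a bound on the multiplicity of each large degree $b_i(n)$. You put your finger on a problem that sits earlier: ``large'' and ``typical'' in Vershik--Kerov are pinned down only up to a factor $\exp(o(\sqrt n))$ in the degree, whereas $b_m(n)/b_1(n)\to 1$ requires additive $o(1)$ precision in $\log\chi^\lambda(1)$, a gap of scale that the concentration machinery does not begin to bridge. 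Your perturbative reformulation makes the same barrier concrete: single box moves near the maximiser cost a multiplicative factor bounded away from $1$, so elementary perturbations give degree $\asymp b_1(n)$ but not $(1+o(1))b_1(n)$, and it is not at all clear that a coordinated $k(n)$-box move can cancel those contributions to $o(1)$, let alone that $m-1$ independent such moves exist. One small caveat: you call the possibility that the resulting near-maximisers collapse onto a single degree ``comparatively routine,'' whereas the paper treats this multiplicity question as a genuine barrier in its own right, so I would not wave it away. Both your route and the paper's heuristic stall at the same precision issue; the conjecture is open, and your proposal correctly locates where the real work would have to be done.
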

\setcounter{section}{1}

This should be a consequence of there being many `large-degree' characters clustered around the maximal degree, but as far as the author is aware this result is not known.

At the end of Section \ref{sec:chardegs} we also include some information about how far the partitions $\lambda^{(n)}$ witnessing $b_1(n)$ stray from being self-conjugate. Contrary to what might be considered at first glance reasonable -- that as $n$ increases the partition becomes closer and closer to self-conjugate (mimicking the curve known to maximize the continuous version of the hook length formula found in \cite{loganshepp1977,vershikkerov1977}) -- it seems that the difference between $\lambda^{(n)}$ and a self-conjugate partition is unbounded as $n$ grows. This has implications for the posited algorithms for guessing the largest-degree irreducible character that start by looking at self-conjugate partitions, or the largest-degree irreducible character for $S_{n-1}$.

\medskip

All the conjectures and results herein were based on calculations using Magma \cite{magma}, and in particular the package \texttt{symmetric}, developed by the author. This package contains programs to compute all information described in this article, and most information is furthermore stored in databases in the package, for ease of access. The package, together with documentation on the included programs, is available on the author's website, GitHub, and the arXiv page for this article.

I would like to thank Benjamin Sambale for noting an error in the formulation of Conjectures \ref{conj:avmult} and \ref{conj:avmultcc}. I would like to thank the two referees for their helpful and incisive comments, that improved a number of facets of this work.

\section{Preliminaries}

Our notation is almost entirely standard.

\subsection{Symmetric groups}
\label{sec:sn}
Here we summarize the theory of the symmetric groups that we need, together with some terms and results from \cite{craven2008} that will be useful. A general introduction to the theory of representations of symmetric groups can be found in a number of places, for example \cite{james,jameskerber,sagan}.

The irreducible representations/characters of the symmetric group $S_n$ over a field of characteristic $0$ are labelled by partitions $\lambda$. The most important part of this for us is the \emph{hook formula}. First, we note the pictorial description of partitions known as \emph{Young diagrams} (also known as Ferrers diagrams). Ordering the parts of $n$ in the partition $\lambda=(\lambda_1,\dots,\lambda_r)$ so that $\lambda_i\geq \lambda_{i+1}$ for all $i$ (for us, partitions satisfy $\lambda_i>0$ for all parts), the Young diagram has $\lambda_i$ boxes in row $i$, aligned left. For example, the Young diagram for the partitions $(5,4,1)$ is below.
\[ \yng(5,4,1)\]
The \emph{conjugate partition} is the partition obtained by reflecting the Young diagram in the diagonal line in the $y=-x$ direction through the top-left box. For example, the conjugate partition of $(5,4,1)$ is $(3,2,2,2,1)$.
\[ \yng(5,4,1)\qquad \yng(3,2,2,2,1)\]
Given a box in the Young diagram of a partition $\lambda$, the \emph{hook} consists of the box itself, all boxes below it in $\lambda$, and all boxes to the right of it. The \emph{length} of the hook is the number of boxes in the hook. For example, the lengths of the hook in the boxes of $(5,4,1)$ are given below.
\[ \young(75431,5321,1)\]
Notice that a partition and its conjugate have the same multiset of hook lengths. We denote this multiset of hook lengths by $H(\lambda)$, and wish to emphasise that it is a \textbf{multiset}, not a set.

If $\chi_\lambda$ denotes the irreducible character of $S_n$ labelled by $\lambda$ then the degree of $\chi_\lambda$ is
\[ \chi_\lambda(1)=\frac{n!}{\prod_{h\in H(\lambda)} h},\]
where the product runs over hook lengths in $\lambda$.

At one point we will use the branching rule. If $\lambda$ is a partition of $n$, then a \emph{removable box} is a box in the Young diagram of $\lambda$ that, upon removal, yields a Young diagram of a (smaller) partition. This is equivalent to the box having hook length $1$. The opposite is an addable box: an \emph{addable box} is a box that is \emph{not} in the Young diagram of $\lambda$ but, if it is added to $\lambda$, the result is also the Young diagram of a partition.

The branching rule states that if $\lambda$ is a partition of $n$, then the restriction of $\chi_\lambda$ to $S_{n-1}$ is the sum of all irreducible characters $\chi_\mu$ such that $\mu$ is obtained from $\lambda$ by removing a removable box. By Frobenius reciprocity, the induction of $\chi_\lambda$ to $S_{n+1}$ is the sum of $\chi_\mu$ for $\mu$ obtained from $\lambda$ by adding an addable box.

\medskip

We now recap some definitions and results from \cite{craven2008}. Let $\Lambda$ be a set of partitions, all of the same integer $n$. The set $\Lambda$ is a \emph{cluster} if all elements of $\Lambda$ have the same hook lengths, i.e., $H(\lambda)=H(\mu)$ for all $\lambda,\mu\in\Lambda$. Since there is both the cardinality of the cluster and the number that the elements of the cluster are partitions of, we will say that the \emph{size} of the cluster is the size of the partitions in it, and the \emph{order} of the cluster is the number of elements in the cluster. A cluster is \emph{periodic} of period $p$ if, for all $n\geq 0$, adding $n$ boxes to the first $p$ rows of all partitions in $\Lambda$ forms another cluster. For example, the cluster $\{(5,2,2,2),(4,4,1,1,1)\}$ is periodic of period $3$, because $\{(n+5,n+2,n+2,2),(n+4,n+4,n+1,1,1)\}$ is also a cluster for all $n\geq 0$. (This example comes from \cite{hermanchung1978}.) If $n=0$ then the two partitions in the cluster are conjugate, but this is not the case for $n\geq 1$, so together with their conjugates we find four partitions with the same hook lengths for all integers of the form $3n+11$, $n\geq 1$.

To prove that a cluster is periodic of period $p$ we may use \cite[Theorem 4.2]{craven2008}, which asserts that a cluster is periodic if it satisfies two easy-to-check criteria. The first of these is that if one removes the first $p$ parts of all partitions in the cluster, we obtain another cluster. The second is that the multisets $\{(\lambda_i-\lambda_j)-(i-j)\mid 1\leq i<j\leq p\}$ are the same for all partitions $\lambda$ in the cluster.

\subsection{General linear groups}
\label{sec:gln}
The complex character theory of the general linear groups $\GL_n(q)$ uses many of the combinatorial concepts involved in symmetric groups such an hook lengths. One may find a description of some of this theory in \cite{carterfinite} and \cite{hissgroups}.

We consider a subset of the ordinary characters, called \emph{unipotent characters}. Although the general definition for arbitrary groups of Lie type is complicated, for $\GL_n(q)$ there is an easier description of these: they are the constituents of the permutation character on the Borel subgroup of upper triangular matrices.

These characters can be labelled by partitions of $n$, and their degrees are given by polynomials in $q$, called the \emph{generic degree} of the character (or of the corresponding partition). If $\lambda=(\lambda_1,\dots,\lambda_r)$ is a partition of $n$, define Lusztig's $a$-function by
\[a(\lambda)=\sum_{i=1}^r (i-1)\lambda_i.\]
Lusztig's $a$-function can be described as the smallest possible sum of entries in a semistandard tableau of shape $\lambda$ (with entries from $\{0,1,\dots\}$).
The degree of the character $\chi_\lambda$ is (see \cite[3.2.6]{hissgroups})
\[ \chi_\lambda(1)=q^{a(\lambda)}\frac{\prod_{i=1}^n (q^i-1)}{\prod_{h\in H(\lambda)} (q^h-1)}.\]
Thus $\chi_\lambda(1)=\chi_\mu(1)$ (for all $q$) if and only if $H(\lambda)=H(\mu)$ and $a(\lambda)=a(\mu)$.

Although two unipotent characters for $\GL_n(q)$ for a fixed $q$ could coincidentally have the same character degree, we are more interested in those partitions $\lambda$ and $\mu$ for which the generic degrees coincide, not just their value at a particular $q$. Notice that $\chi_\lambda(1)=\chi_\mu(1)$ if and only if $a(\lambda)=a(\mu)$ and the hook length multisets $H(\lambda)$ and $H(\mu)$ coincide. Thus this is a strictly stronger question than whether the symmetric group character degrees coincide. In general very little is known about the coincidence of generic degrees of unipotent characters.\footnote{If you are wondering how this reconciles with the statement in the Introduction that it was relatively easy to find many characters of groups of Lie type of the same degree, the characters with the same degree found in \cite{moreto2007} are not unipotent.}

In geometric and combinatorial versions of Brou\'e's conjecture, there is a bijection between the irreducible unipotent characters in a given block $B$ of a group of Lie type and those of its Brauer correspondent $b$. This bijection can be found using a deformation of the algebra $b$, called a cyclotomic Hecke algebra. The parameters of the cyclotomic Hecke algebra are known, and to each character of the cyclotomic Hecke algebra there is a generic degree attached to it, calculated from the parameters, which is the generic degree of one of the unipotent characters in the block $B$. If all of the generic degrees of the characters in $B$ are distinct, then there is a canonical bijection between the characters of $B$ and of $b$, going via this cyclotomic Hecke algebra. Proposition \ref{prop:alwaystwogln}, and particularly the data in Table \ref{t:glnpartitionssamedegree}, show that one cannot easily read off this bijection, and one actually has to calculate it, at least for some of the characters.\footnote{To check if two unipotent characters lie in the same block module $\ell$, we first find the multiplicative order $d$ of $q$ modulo $\ell$, and then $\chi_\lambda$ and $\chi_\mu$ lie in the same $\ell$-block if and only if $\lambda$ and $\mu$ have the same $d$-core. If $d=1$ then this condition even becomes vacuous, but there will be other $d$ where the $d$-core is self-conjugate.} Thus the results on unipotent character degree coincidences have impact on local-global conjectures in modular representation theory (see \cite{craven2012un,cravenrouquier2013} for more details).

\section{Maximal and average multiplicities}
\label{sec:mults}
\subsection{Symmetric groups}

Determining whether two partitions $\lambda$ and $\mu$ have the same product of hook lengths looks a difficult task, which is why in \cite{craven2008} the author worked with the tighter condition that the hook lengths are identical, not merely that their products were identical. This, of course, has the downside that one misses many pairs of partitions with the same product of hook lengths. In this section we look at both whether partitions have the same multiset of hook lengths and whether they have the same products of hook lengths. After this, we consider the situation for $\GL_n(q)$.

First we see what we can prove about the case where the multiset of hook lengths must be the same, rather than their product.

\begin{thm}\label{thm:boundsformultisets} \begin{enumerate}
\item There is a cluster of order $4$ and size $n$ if and only if $n\geq 22$ or $n=14,17,19,20$.
\item There is a cluster of order $8$ and size $n$ if $n\geq 206$. If $n\leq 83$, the largest order of a cluster of size $n$ is $4$, whereas there is a cluster of order $8$ and size $84$.
\end{enumerate}
\end{thm}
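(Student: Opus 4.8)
The plan is to exploit the periodicity machinery from \cite[Theorem 4.2]{craven2008}, together with a finite computer search, in exactly the hybrid fashion flagged in the introduction. For part (i), the ``only if'' direction (that no cluster of order $4$ exists for $n\leq 21$ except the listed values) is a finite check: for each $n$ in that range, enumerate all partitions of $n$, sort them by their multiset $H(\lambda)$ of hook lengths, and verify that no equivalence class has four elements except when $n\in\{14,17,19,20\}$. For the ``if'' direction one wants a periodic family of clusters of order $4$. Here I would take the Herman--Chung cluster $\{(5,2,2,2),(4,4,1,1,1)\}$ recalled in Section \ref{sec:sn}, which is periodic of period $3$, so that $\{(n{+}5,n{+}2,n{+}2,2),(n{+}4,n{+}4,n{+}1,1,1)\}$ is a cluster of size $3n+11$; for $n\geq 1$ the two partitions are not conjugate, so adjoining their conjugates yields a cluster of \emph{order} $4$ and size $3n+11$. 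That covers one residue class mod $3$ for all sizes $\geq 14$. To cover the other two residue classes one needs two more periodic order-$4$ families (or order-$2$ families of non-self-conjugate partitions whose conjugates are distinct from all four), of period $3$ but with a different offset, found by the same search used for the ``only if'' part; one then checks the two criteria of \cite[Theorem 4.2]{craven2008} — namely that deleting the first three parts leaves a cluster, and that the multiset $\{(\lambda_i-\lambda_j)-(i-j)\mid 1\leq i<j\leq 3\}$ is constant across the cluster — which is a routine finite verification on the seed partitions. Patching the three families together, and then handling the finitely many small sporadic sizes ($n=14,17,19,20$ and the bottom of each periodic family) by direct exhibition, gives the full statement of (i).

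For part (ii) the strategy is the same but with order $8$ in place of order $4$. The claim is that a cluster of order $8$ exists for all $n\geq 206$; I would produce this by starting from a cluster of order $4$ among partitions that, crucially, are \emph{not} closed under conjugation and have no self-conjugate member, so that taking the four partitions together with their four conjugates gives eight distinct partitions with a common hook-length multiset (recall $H(\lambda)=H(\lambda')$ always). One can obtain a periodic order-$4$, conjugation-free family either by combining two of the period-$3$ order-$2$ families above into a single period-$3$ order-$4$ family on which conjugation acts without fixed points, or by locating directly (via the search) a seed cluster of order $4$ on partitions with enough rows and columns that none is self-conjugate and the four conjugates are new. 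Running the seed through \cite[Theorem 4.2]{craven2008} establishes periodicity, and then all sizes in one residue class mod the period that are $\geq 206$ are covered; repeating with offset seeds for the remaining residues, and checking the small ``start-up'' sizes of each family by hand, completes the claim $n\geq 206$. The remaining assertions of (ii) — that for $n\leq 83$ the maximum cluster order is exactly $4$, and that size $84$ admits a cluster of order $8$ — are pure finite computations: for each $n\leq 84$, bucket the partitions of $n$ by hook-length multiset and read off the largest bucket, confirming it is $4$ for $n\leq 83$ and $8$ (with an explicit eight-element example) for $n=84$.

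The main obstacle is not the periodicity verification, which is mechanical once a seed is in hand, but \emph{finding} seed clusters of order $8$ — equivalently order $4$ with no self-conjugate partition and whose conjugates are disjoint from the original four. Clusters of high order are sparse, and one must locate an example small enough that the induced periodic family ``switches on'' by $n=206$ yet structured so the two hypotheses of \cite[Theorem 4.2]{craven2008} hold and conjugation is fixed-point-free on it; this is the part that genuinely requires the computer experiments described in the introduction rather than a closed-form construction. A secondary nuisance is bookkeeping at the boundary: the periodic families cover only sizes above some threshold in each residue class, so one must carefully enumerate and verify by direct exhibition every size below those thresholds in the ranges $n\geq 22$ (for order $4$) and $n\geq 206$ (for order $8$), as well as the four sporadic values $n=14,17,19,20$, to be sure the stated ranges are exactly right.
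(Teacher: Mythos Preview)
Your overall strategy matches the paper's: periodic clusters via \cite[Theorem~4.2]{craven2008} to cover all large $n$ in each residue class, conjugation to double the order, and a finite computer search for small $n$. For part~(i) the paper does precisely what you outline, using the Herman--Chung family you name together with two further period-$3$ order-$2$ families (sizes $3n+19$ and $3n+24$), so that the three residues mod~$3$ are covered from $n=14,19,24$ respectively; with conjugates this gives order~$4$ exactly for the claimed $n$.

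One point to flag for part~(ii): your first option, ``combining two of the period-$3$ order-$2$ families above into a single period-$3$ order-$4$ family'', does not work. Those three families lie in \emph{different} residue classes mod~$3$ (that was the whole point in part~(i)), so no two of them ever have the same size; and even if one arranged equal sizes, two separate order-$2$ clusters share their hook multisets only within each pair, not across pairs, so the union is not a cluster. Your second option --- locating directly a seed cluster of order~$4$, none self-conjugate and with four new conjugates --- is exactly what the paper does, except that the seeds it exhibits have period~$9$, not~$3$: nine explicit order-$8$ periodic clusters (listed as four partitions plus conjugates), one for each residue class mod~$9$, with the largest seed at size~$214$; this is what forces the threshold $206$. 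So your plan is right once you discard the ``combining'' idea and allow the period to be discovered rather than assumed equal to~$3$.
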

\begin{proof}
We first consider clusters of order $4$. In \cite{hermanchung1978}, Herman and Chung find two periodic clusters of order $2$:
\[\{(n+6,n+3,n+3,2),(n+5,n+5,n+2,1,1)\},\qquad  \{(n+8,n+4,n+3,3,1),(n+7,n+6,n+2,2,1,1)\}.\]
Together with their conjugates, this yields a cluster of size $4$ for all $3n+14$ and $3n+19$ with $n\geq 0$. In \cite{craven2008} another periodic cluster of period $3$ was found:
\[ \{(n+10,n+4,n+4,4,2),(n+8,n+8,n+2,2,2,1,1)\}.\]
This yields a cluster of size $4$ for all $3n+24$ with $n\geq 0$. This proves that there is a cluster of order $4$ and size $n$ for all $n\geq 22$ and $n=14,17,19,20$. To prove that there are no clusters of the remaining sizes we can simply do a computer search of all partitions of the remaining integers. This proves (i).

For (ii), we find nine periodic clusters of period $9$, order $8$ and various sizes, one for each congruence modulo $9$.
\begin{align*} \text{Size $180$:}&
\\&(28,21,20,19,17,13,12^2,11^2,4^3,3,1),\quad (26,25,18,17,16^2,15,11,9^2,8,2^4,1^2),
\\ &(25,23,19,18^2,17,16,9,8^2,6,2^6,1),\quad(23,22^2,21,17,15,14,13,6^2,5^2,4,1^7).
\\ \text{Size $190$:}
\\ &(29,23,20^2,18,15,13^2,11^2,5,4,3^2,1^2),\quad (28,25,19^2,17^2,15,12,10^2,7,3,2^3,1^2),
\\ &(26,24,21,19^2,17^2,11,8^2,6,3,2^4,1^3),\quad (25,23^2,21,18,16^2,13,7^2,5^2,3,2,1^6).
\\ \text{Size 128:}
\\ & (20,17^2,11,9^3,8^3,5^2,2),\quad (19^2,16,10^2,9^3,7^3,4,1^2),
\\ & (17,15^3,14^2,8,5^3,3^3,2^3),\quad (16^2,15^3,13,7^2,4^3,3^3,1^3).
\\ \text{Size 192:}
\\&(30,21^3,19,13^2,12^3,4^4,2),\quad(27^2,18^3,17^2,11,9^3,2^5,1^2),
\\&(27,25,19^2,18^3,9^3,7,2^7),\quad(24^2,23^2,17,15^3,6^3,5^2,1^8).
\\ \text{Size 130:}
\\ &(19,17,16^2,10,8^2,7^3,5,4^2,2),\quad (18^2,17,15,9^2,8^2,6^3,5,3,1^2),
\\ &(18,16^2,15^2,9,7,6^3,4^2,3^2,2),\quad (17^2,16^2,14,8^2,7,5^3,4^2,2,1^2).
\\ \text{Size 140:}
\\ &(21,17^3,12,9,8^4,4^3,3),\quad (20,17,16^3,11,7^4,4,3^4),
\\ &(19^3,15,10^3,9,6^4,2,1^3),\quad 
(18^3,17,14,9^3,5^4,4,1^4).
\\ \text{Size 159:}
\\ &(25,19^2,16,14,11^2,10^3,4^3,2),\quad(23^2,17,14^2,13^2,10,8^3,2^3,1^2),
\\&(22,20,17^2,16^2,13,7^3,5,2^6),\quad(20^2,19^2,16,14,11^2,5^3,4^2,1^6).
\\ \text{Size 214:}
\\ &(33,23^3,22,15,14^2,13^2,5,4^3,3,1),\quad (30,29,20^4,19,12,10^2,9,3,2^5,1^2),
\\ &(30,28,21,20^4,11,10^2,8,3,2^6,1),\quad(27,26^2,25,18,17^3,7^2,6^2,5,2,1^8).
\\ \text{Size 170:}
\\ &(25,21,20,18,16,13,11^2,9^2,5,4,3^2,1^2),\quad (24,22,21,17,15,13^2,11,8^2,6,5,2^2,1^3),
\\ &(24,21,19^2,17,15,11,10,8^2,5,3^3,2,1^2),\quad(23,21^2,19,16,14,12,11,7^2,5^2,3,2,1^4).
\end{align*}

Since this covers all nine congruence classes, we prove the result. Using the \texttt{symmetric} Magma package, all clusters of size $n$ have been constructed for $n\leq 84$, and they all have order at most $4$ apart from one for $n=84$.
\end{proof}

There is an obvious question here, which is whether there are clusters of size $8$ in the range $85\leq n\leq 205$. The answer is `maybe'. First, by working with periodic clusters of order $8$ and different periods, from $7$ to $10$, one is able to find clusters of order $8$ and all sizes between $182$ and $205$, so one can reduce the bound to $182$. (All such clusters appear in the periodic cluster database in the Magma package \texttt{symmetric}.)

The smallest cluster of order $8$ has size $84$ (it is of period $7$), and there are known (periodic) clusters of order $8$ of all sizes from $84$ to $205$ apart from
\[ \{ 85, 86, 87, 88, 89, 90, 92, 93, 94, 95, 96, 97, 99, 100, 101, 102, 103, 104,
106, 107, 108, 109,\]
\[ 110,111, 114, 115, 116, 117, 118, 122, 125, 132, 135, 136,
138, 142, 143, 150, 151, 181 \}. \]
It is not known whether there are clusters of order $8$ of these sizes, but it is certainly likely that there are clusters of some of the larger sizes in the list above. There is \emph{no} cluster of order $8$ and size up to $83$ by an exhaustive computer search, but computer programs take increasingly inconveniently long times to prove this.\footnote{The case $n=82$ took around two weeks using a Magma program, just because of the sheer number of partitions involved. By $n=90$, the largest case known, it takes months, with no real option for parallelization.} From an exhaustive search, there is a single cluster of order $8$ of sizes $84$ and $85$ (all other clusters have order $4$ or smaller), and no cluster of order greater than $4$ of sizes $86$ to $90$.

As an example we note that, of the 239 clusters of size $70$ and order $4$, only 89 come from a periodic cluster of size $2$ (together with conjugate partitions). Accessing non-periodic clusters without an exhaustive search looks difficult.

\medskip

Moving from the multisets having to coincide to coincidences of character degrees, we can prove much better bounds than those in Theorem \ref{thm:boundsformultisets}. First, for $n\leq 129$ we can use Magma to explicitly calculate $m(n)$.\footnote{The reason we stop at $n=129$ is technical. This is the point at which the author's programs start to break down, because we start reaching what appear to be limits on the maximum size of a set in Magma, or something like the maximum number of bytes available to store a set. It was not processor or memory limits. One could go slightly higher by splitting the set of character degrees up and repeatedly running the program, but such manoeuvres will only eke out a few more cases, with one having to slice the set of character degrees up in ever finer ways. One might need to go to, perhaps, $n=200$ to see a better pattern in Figure \ref{fig:avmult}, and that is simply infeasible.} It is given in Table \ref{t:maxmult}. Combining this with Theorem \ref{thm:boundsformultisets} proves almost all of the next result.

\begin{thm} \label{thm:boundsfordegrees}
\begin{enumerate}
\item We have that $m(n)\geq 4$ if and only if $n\geq 11$ or if $n=6,7$.
\item We have that $m(n)\geq 6$ if and only if $n\geq 17$ or if $n=13,14,15$. 
\item We have that $m(n)\geq 8$ if and only if $n\geq 21$ or if $n=17,19$.
\end{enumerate}
In all cases, the same statement holds when restricting ones attention only to characters parametrized by partitions that are not self-conjugate. In particular, this means that for $n\geq 17$, $A_n$ possesses three irreducible characters of the same degree that extend to $S_n$.
\end{thm}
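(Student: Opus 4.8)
The plan is to prove the three lower bounds together with their sharpness by separating the range $n\leq 129$, where $m(n)$ is known exactly, from the range $n\geq 130$, where one produces explicit families of characters of equal degree, partly from the periodic clusters of Theorem~\ref{thm:boundsformultisets} and partly by computer.

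First I would settle everything for $n\leq 129$, and in particular \emph{all} of the ``only if'' assertions, directly from Table~\ref{t:maxmult}: that table records $m(n)$ for $n\leq 129$, so one simply reads off that within this range $m(n)\geq 4$ precisely when $n\in\{6,7\}$ or $n\geq 11$, that $m(n)\geq 6$ precisely when $n\in\{13,14,15\}$ or $n\geq 17$, and that $m(n)\geq 8$ precisely when $n\in\{17,19\}$ or $n\geq 21$. Since the exceptional small $n$ for which an inequality fails are all at most $20$, this disposes of every ``only if'' part. One records at the same time, from the same computation, that in each of the positive cases the relevant degree is already attained by the required number of partitions that are not self-conjugate.

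For $n\geq 130$ only the lower bounds remain. The bound $m(n)\geq 4$ is immediate from Theorem~\ref{thm:boundsformultisets}(i), which gives a cluster of order $4$ for every $n\geq 22$; the periodic clusters used there consist of two non-self-conjugate partitions together with their (for parameter at least $1$, distinct) conjugates, so all four partitions are non-self-conjugate. For $m(n)\geq 8$, which a fortiori yields $m(n)\geq 6$, I would invoke Theorem~\ref{thm:boundsformultisets}(ii) and its proof: for $n\geq 206$ the nine periodic clusters of period $9$ and order $8$ cover every residue class modulo $9$ and again consist of non-self-conjugate partitions, and for the sizes $130\leq n\leq 205$ one uses the periodic clusters of order $8$ and periods $7$ to $10$ recorded in the \texttt{symmetric} package --- which in particular handle all sizes from $182$ to $205$ --- while for the finitely many remaining sizes (those, such as $132,135,136,138,142,143,150,151,181$, for which no cluster of order $8$ is known) one exhibits, by a targeted computer search, eight non-self-conjugate partitions of $n$ with a common product of hook lengths. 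This last requirement is strictly weaker than forming a cluster, which is why such collections can be found even in the absence of a cluster of order $8$.

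The final assertion about $A_n$ is then formal. If $\lambda$ is not self-conjugate, then $\chi_\lambda$ restricts irreducibly to $A_n$, $\chi_\lambda|_{A_n}=\chi_{\lambda'}|_{A_n}$, and $\chi_\mu|_{A_n}\neq\chi_\lambda|_{A_n}$ for $\mu\notin\{\lambda,\lambda'\}$; since the set of non-self-conjugate partitions of $n$ attaining a fixed character degree is closed under conjugation, whenever it has size at least $6$ --- which holds for some degree as soon as $n\geq 17$ by part (ii) --- it is a union of at least three conjugate pairs, and these pairs yield three distinct irreducible characters of $A_n$ of that degree, each extending to $S_n$. I expect the genuine obstacle to be the window $130\leq n\leq 205$: there one cannot fall back on the exhaustive computation underlying Table~\ref{t:maxmult}, nor, for the ``gap'' sizes, on the periodic machinery of Theorem~\ref{thm:boundsformultisets}, so one is forced to rely on a non-exhaustive search that merely happens to locate eight partitions with a common product of hook lengths, and checking that this succeeds for every single $n$ in the window is the delicate point.
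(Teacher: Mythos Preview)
Your proposal is correct and follows essentially the same route as the paper: split at $n\leq 129$ using Table~\ref{t:maxmult}, handle $n\geq 206$ via the period-$9$ clusters of Theorem~\ref{thm:boundsformultisets}(ii), close $182\leq n\leq 205$ with the further periodic clusters in the \texttt{symmetric} database, and treat the residual gap sizes in $130\leq n\leq 181$ by direct search for eight equal-degree characters. The only cosmetic difference is that the paper verifies the non-self-conjugate refinement uniformly over $2\leq n\leq 181$ with a dedicated run of \texttt{MaximumMultiplicityOfSymmetricCharacterDegreeExceeds} (with \texttt{IgnoreSelfConjugatePartitions} set), rather than inferring it from the computation behind Table~\ref{t:maxmult} and from the shape of the periodic clusters as you do; both amount to the same thing.
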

\begin{proof} For the bound for $m(n)\geq 4$, we can use Theorem \ref{thm:boundsformultisets} to prove the result for $n\geq 22$, and then Table \ref{t:maxmult} for the remaining values of $n$. For the second and third parts, Theorem \ref{thm:boundsformultisets} proves the result for $n\geq 206$, and from the remark afterwards, we know the result for $n\geq 182$. Table \ref{t:maxmult} gives us the result for $n\leq 129$. At this point one can either check each value in the range $130\leq n\leq 181$ individually that $m(n)\geq 8$ just by calculating all character degrees one at a time until at least eight of the same are found, or use the other known clusters of order $8$ to reduce the number of possible $n$ for which a brute-force search is needed. Either way, this proves the result.

To prove the tighter statement about the partitions being not self-conjugate, periodic clusters automatically satisfy this statement, so we merely have to prove the result in the range $2\leq n\leq 181$. In the \texttt{symmetric} package, the program \texttt{MaximumMultiplicityOfSymmetricCharacterDegreeExceeds}, which is fairly quick, has a parameter \texttt{IgnoreSelfConjugatePartitions}. Setting this to be true ignores irreducible characters parametrized by self-conjugate parameters, and can be used to verify the result in this range.
\end{proof}

From Table \ref{t:maxmult} we see what the answer should be for larger bounds: for $m(n)\geq 10$ we need $n\geq 25$, for $m(n)\geq 12$ we need $n\geq 28$, and so on. However, this is conjectural because we do not have a version of Theorem \ref{thm:boundsformultisets} for these values, to eliminate all but a small number of $n$ that we can check individually. The proof in \cite{craven2008} gives no explicit bounds, and the version in \cite{craven2010a} gives us explicit bounds of the form $m(n)\geq O(n^{1/7})$. These are obviously not useful for larger values of $m(n)$.

\medskip
One other option is, instead of considering the largest multiplicity $m(n)$, to consider the \emph{average} multiplicity. This is simply $k(G)/|\cd(G)|$. For symmetric groups, $k(S_n)$ is obviously the number of partitions of $n$, so we are dividing the number of partitions of $n$ by the number of irreducible character degrees of $n$. If there were no coincidences of character degrees apart from conjugate partitions, the average multiplicity should be slightly below $2$, because the number of self-conjugate partitions is small compared with the number of partitions.

However, we know that there are coincidences of character degrees, since $m(n)\to\infty$ as $n\to\infty$. Figure \ref{fig:avmult} gives the ratio $k(S_n)/|\cd(S_n)|$ for $1\leq n\leq 129$. It looks far from conclusive at this stage whether the average multiplicity has a limit (maybe 2.5?\footnote{A suggestion from one of the referees is that it could possibly be related to the constant $\pi\sqrt{2/3}\sim 2.565$, since this quantity appears in the asymptotic formula for the number of partitions, \[p(n)\sim \frac{e^{\pi\sqrt{2n/3}}}{4n\sqrt3}.\]}) or tends to infinity as $n$ increases.\footnote{In {{\cite{moreto2007}}}, Moret\'o calculated this ratio for $n\leq 55$, but the general trend is not particularly obvious for those smaller values of $n$.}

\ThisAltText{Graph of the number of conjugacy classes divided by the number of distinct character degrees for n up to 129. The graph is quite jagged, but increases overall but more slowly as n increases. For n above 100 the graph is slightly jagged and very slowly increasing.}
\begin{figure}\centering \includegraphics[width=0.9\textwidth]{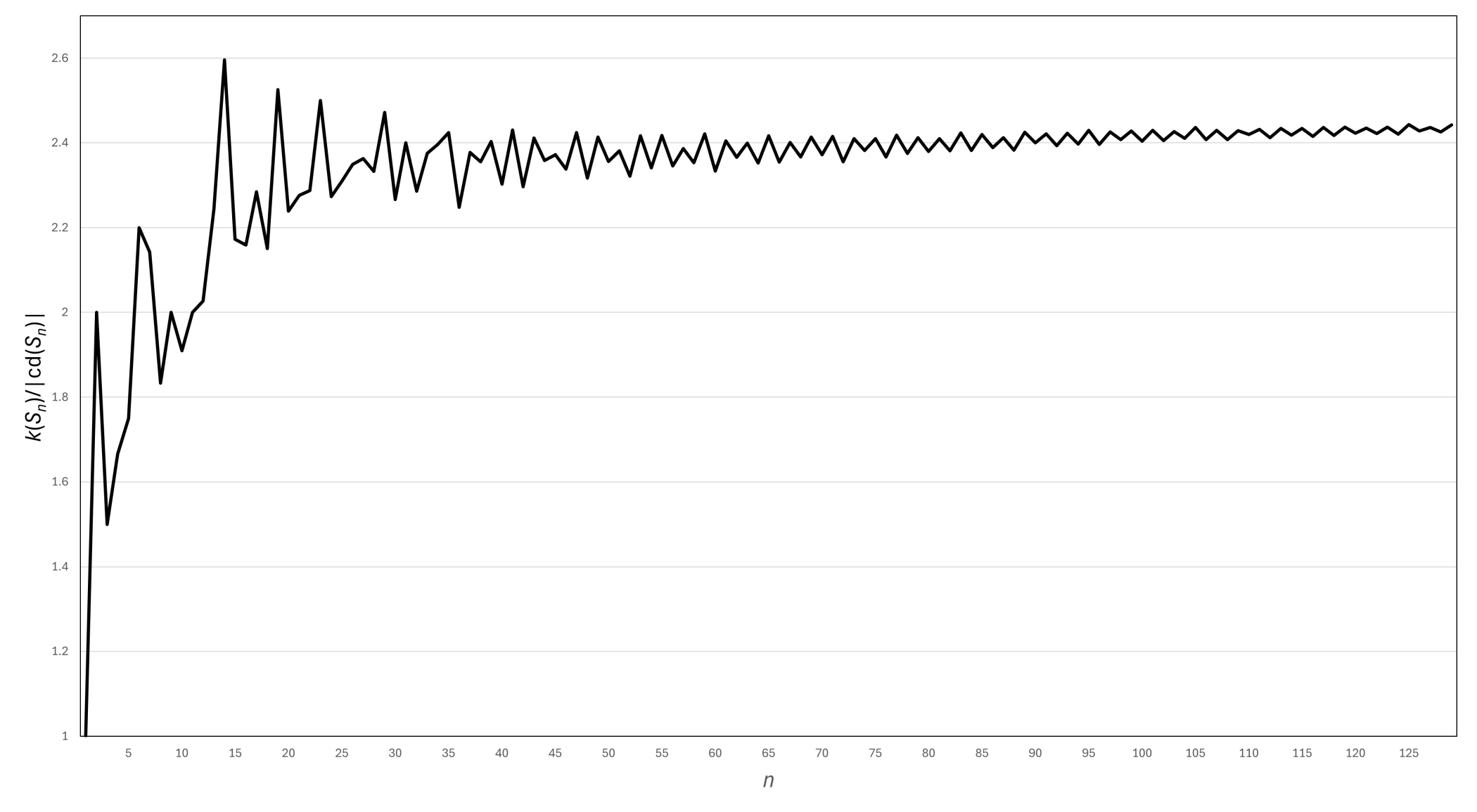}\caption{Graph of the ratio $k(S_n)/|\cd(S_n)|$ for $1\leq n\leq 129$.}\label{fig:avmult}
\end{figure}

We can ask the question for arbitrary finite groups, a significant strengthening of the result by Moret\'o \cite{moreto2007} and the author \cite{craven2008} on $|G|$ being bounded by a function of $m(G)$. In \cite[p.245]{moreto2007}, Moret\'o asks this question specifically for soluble groups rather than all groups, but it looks as though this might be true, at least for all groups other than symmetric groups.

\begin{conj}\label{conj:avmult} There exists a function $f:\Q\to\Q$ such that, if $G$ is a finite group with no alternating composition factor, then $|G|\leq f(k(G)/|\cd(G)|)$.
\end{conj}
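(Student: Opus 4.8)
The plan is to reduce the conjecture, via the Classification of Finite Simple Groups, to the behaviour of the invariant $r(G):=k(G)/|\cd(G)|$ on finite simple groups and under the basic group-theoretic constructions. Two facts drive everything. First, $r$ is supermultiplicative on direct factors: since $\cd(A\times B)=\{ab:a\in\cd(A),\ b\in\cd(B)\}$ has at most $|\cd(A)||\cd(B)|$ elements while $k(A\times B)=k(A)k(B)$, we get $r(A\times B)\geq r(A)r(B)$, so since every non-abelian simple group has a repeated character degree, a group with a large direct power of a fixed simple group as a section already has large $r$. Second, and more importantly, $r(S)\to\infty$ as $|S|\to\infty$ along non-abelian simple groups $S$ with no alternating composition factor: for a fixed Lie type, every Lusztig-series degree is one of a bounded list of polynomials in $q$ (the degree is determined by the type of a semisimple element together with a unipotent degree of its centraliser, and for fixed rank there are boundedly many such data), so $|\cd(S)|=O(1)$ while $k(S)\to\infty$ with $q$; and for fixed $q$ with Lie rank $\ell\to\infty$, the number of such data — hence $|\cd(S)|$ — grows only sub-exponentially in $\ell$, whereas $k(S)$ grows exponentially in $\ell$. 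Any coincidences among the degree polynomials at the given $q$, of the sort studied in Section~\ref{sec:gln}, only shrink $|\cd(S)|$ further and so help. For alternating groups this argument fails, or is at least not known (cf.\ Figure~\ref{fig:avmult}), which is exactly why they are excluded; the finitely many sporadic groups need no comment.

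The main work is then to push a bound on $r(G)$ down a chief series. Let $R$ be the soluble radical; then $\soc(G/R)=S_1^{t_1}\times\cdots\times S_m^{t_m}$ is a direct product of non-abelian, non-alternating simple groups and $G/R$ embeds in $\prod_i\bigl(\Aut(S_i)\wr\Sym(t_i)\bigr)$. One needs two kinds of estimate: (a) quantitative control of $k$ and $|\cd|$ along the three terms of a normal section — one has $k(G/N)\leq k(G)$ and $|\cd(G/N)|\leq|\cd(G)|$, together with Clifford-theoretic lower bounds on $|\cd(G)|$ and on the multiplicities of degrees of $G$ in terms of the $G$-action on $\Irr(N)$; and (b) the statement that a large almost-simple or wreath-type section of $G/R$ forces $G$ itself to have both many distinct and many repeated degrees. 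Combining (a), (b), the supermultiplicativity above and $r(S)\to\infty$, one would conclude that $|G/R|$ large implies $r(G)$ large.

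This reduces the conjecture to bounding $|R|$ in terms of $r(G)$, that is, to the soluble case, which is Moret\'o's original question from \cite{moreto2007}. For a soluble group the plan is the standard one: bound the Fitting height and the orders of the chief factors in terms of $r(G)$ by working with $G/\Phi(G)$ as a product of primitive affine-type pieces $V\rtimes H$, and by showing that in a soluble group of large order one such piece is large enough that many orbit lengths of $H$ on $V$ — equivalently, many character degrees of the section — coincide.

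The main obstacle is precisely this last point, compounded by the fact that $r(G)=k(G)/|\cd(G)|$ is \emph{not} monotone under passing to normal subgroups or to quotients (unlike $|\cd(G)|$ alone, which only decreases under quotients), so a naive minimal-counterexample reduction to the almost-simple case does not close, and one genuinely has to extract lower bounds on the quotient $k(G)/|\cd(G)|$ from the structure of $G$, rather than on $k(G)$ or $|\cd(G)|$ separately. This is also why the known bound $|G|\leq f(m(G))$ does not help directly: it is weaker information, since $m(G)\geq r(G)$, so a small average multiplicity is a weaker hypothesis than a small maximal multiplicity. The soluble case on its own is Moret\'o's question and remains open.
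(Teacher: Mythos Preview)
The statement you are attempting to prove is a \emph{conjecture} in the paper; the paper offers no proof. It records only partial progress: the conjecture holds for $p$-groups and for soluble groups with trivial Frattini subgroup \cite[Lemmas 2.2 and 2.3]{moreto2023}, for Lie-type groups of bounded rank \cite[Lemma 6.3]{moreto2007}, and for simple groups of Lie type \cite[Corollary 1.6]{liebeckshalev2005}. So there is nothing in the paper to compare your argument against.

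Your write-up is not a proof but an outline of a strategy, and to your credit you say so: you explicitly identify the soluble case as Moret\'o's open question, and you flag the non-monotonicity of $r(G)=k(G)/|\cd(G)|$ under normal subgroups and quotients as the central obstruction. Those diagnoses are accurate. The ingredient you call ``$r(S)\to\infty$ along non-alternating simple $S$'' is exactly the Liebeck--Shalev result cited above, so that step is genuinely available (your own heuristic for it, especially the fixed-$q$, growing-rank case, is not a proof, but you do not need one).

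The real gap is your step~(b): ``a large almost-simple or wreath-type section of $G/R$ forces $G$ itself to have both many distinct and many repeated degrees.'' This is precisely the content of the conjecture in the insoluble layer, and nothing in your outline supplies it. Clifford theory gives you control over $\Irr(G)$ in terms of $\Irr(N)$ and the action of $G/N$, but it does not give you usable two-sided bounds on the \emph{ratio} $k/|\cd|$ across an extension; degrees can collapse or separate in ways that the orbit data alone do not determine. The supermultiplicativity $r(A\times B)\geq r(A)r(B)$ is correct and useful for direct products, but $\soc(G/R)$ being a direct product does not by itself transfer a lower bound on $r(\soc(G/R))$ to $r(G)$. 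Until that transfer is made precise, the reduction to the soluble radical does not close, and even once it does you are left with the open soluble question. So what you have is a reasonable roadmap of where the difficulties lie, consistent with the paper's discussion, but not a proof.
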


The only reason that alternating groups are excluded from this is that Figure \ref{fig:avmult} does not compel the author to believe that the average multiplicity is unbounded for symmetric groups. We discuss for which groups this conjecture is known below, after the next conjecture.

One could ask the dual question, and consider the multiset of conjugacy class sizes, and their multiplicities. This was considered in \cite{jaikin2005}, where it was proved that the orders of finite soluble groups are bounded by a function of the maximal multiplicity of their conjugacy class sizes, and partially extended to all finite groups in \cite{nguyen2011}. In analogy with $\cd(G)$, write $\cc(G)$ for the set of conjugacy class sizes.

\begin{conj}\label{conj:avmultcc} There exists a function $f:\Q\to\Q$ such that, if $G$ is a finite group then $|G|\leq f(k(G)/|\cc(G)|)$.
\end{conj}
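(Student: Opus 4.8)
The statement is a conjecture and I do not have a proof; here is the approach I would take. Fix $C\in\Q$; it suffices to bound $|G|$ for a finite group $G$ with $k(G)\leq C\,|\cc(G)|$. This hypothesis is considerably stronger than ``$G$ has few conjugacy classes'': since every class size divides $|G|$ we get $k(G)\leq C\,d(|G|)=|G|^{o(1)}$ (with $d$ the divisor function), but there are groups of unbounded order with this few conjugacy classes, so one must really exploit that $G$ has \emph{almost as many distinct class sizes as conjugacy classes}. The easy structural reductions to record first are: abelian groups are harmless, since $|\cc(G)|=1$ and the ratio equals $|G|$; every class size divides $|G:Z(G)|$, so $|Z(G)|\leq k(G)\leq C\,d(|G:Z(G)|)$ and the centre is small; and if $A\trianglelefteq G$ is abelian then $A$ splits into at least $|A|^2/|G|$ conjugacy classes, so $|A|\leq (C\,|G|\,d(|G|))^{1/2}$. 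Hence a large counterexample has neither a large centre nor a large abelian normal subgroup.

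The main line of attack would then be a minimal-counterexample argument built on the Classification of Finite Simple Groups. Passing to a minimal normal subgroup and its centraliser, one tries to reduce to the case that $G$ is an almost simple group, or a central product that is close to a direct power of a simple group, using submultiplicativity of $k$ in extensions and the behaviour of conjugacy classes in such extensions to track both $k(G)$ and $|\cc(G)|$. For simple groups of Lie type $X(q)$ one expects, in the spirit of the easy lower bounds on $m(G)$ mentioned in the introduction, that $k(X(q))$ grows like a fixed power of $q$ while $|\cc(X(q))|$ grows only polynomially in the rank, so that $k(X(q))/|\cc(X(q))|\to\infty$; the sporadic groups are then a finite check.

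The two hardest points, in my view, are the following. First, the symmetric and alternating groups: one must show $k(S_n)/|\cc(S_n)|\to\infty$ (which the data up to $S_{85}$ behind Figure \ref{fig:avmultcc} strongly suggests), that is, that the integers $n!/z_\lambda$, as $\lambda$ ranges over cycle types of $S_n$ with $z_\lambda=\prod_i i^{m_i(\lambda)}\,m_i(\lambda)!$, take far fewer than $p(n)$ distinct values --- a number-theoretic statement about coincidences among the $z_\lambda$ for which I know no argument, and which is exactly the obstruction that forced alternating composition factors out of Conjecture \ref{conj:avmult} (here there is no such escape clause). Second, in the reduction one has to handle groups whose \emph{maximal} class-size multiplicity is enormous --- typically carried by the class size $1$ through a large centre, or by a small prime power as with extraspecial groups --- while the \emph{average} multiplicity stays bounded; the theorems of Jaikin-Zapirain (for soluble $G$) and Nguyen bound $|G|$ in terms of the maximal multiplicity and give nothing here, so one needs a genuinely new mechanism, presumably combining the centre estimate above with a bound on $k(G/Z(G))$ to rule out such concentration. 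I would expect the soluble case to be within reach of a refinement of Jaikin-Zapirain's methods, with the alternating-group asymptotics and the Lie-type estimates being the parts that require genuinely new ideas.
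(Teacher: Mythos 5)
This is a conjecture, and the paper offers no proof of it either, so there is no argument of the paper's for you to match; you have correctly read the situation as one of sketching a research programme rather than giving a proof. Your elementary reductions are all correct: central elements are singleton classes so $|Z(G)|\leq k(G)$, every class size divides $|G:Z(G)|$ so $|\cc(G)|\leq d(|G:Z(G)|)$, and an abelian normal subgroup $A$ splits into at least $|A|^{2}/|G|$ conjugacy classes (since each such class has size at most $|G:A|$), giving $|A|\leq (C\,|G|\,d(|G|))^{1/2}$. The CFSG and minimal-counterexample framing is the standard one, and the hard cases you isolate are the right ones.

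Two remarks. First, your Lie-type heuristic conflates two regimes: for fixed type with $q\to\infty$ the question is whether $|\cc(X(q))|$ grows strictly more slowly in $q$ than $k(X(q))\sim q^{r}$, whereas for unbounded rank one needs an entirely different estimate; the phrase ``$|\cc(X(q))|$ grows only polynomially in the rank'' mixes these up and should be separated. Second, the paper records after Figure~\ref{fig:avmultcc} that both average-multiplicity conjectures are already known for $p$-groups and for soluble groups with trivial Frattini subgroup \cite{moreto2023}, and for Lie-type groups of bounded rank \cite{moreto2007}; only the character-degree version (Conjecture~\ref{conj:avmult}) is known for all simple groups of Lie type via \cite{liebeckshalev2005}. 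This partly confirms your expectation that the soluble case should yield to Jaikin-Zapirain-style methods, and it narrows the genuinely open ground to exactly what you identify: the symmetric/alternating asymptotics (the distribution of distinct values of $z_\lambda$) and unbounded-rank groups of Lie type, together with assembling the reduction for general soluble groups without the trivial-Frattini hypothesis.
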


In other words, the average multiplicity of a conjugacy class size should tend to infinity as the group order does. The evidence for this for symmetric groups is much more convincing, and looks potentially easier to prove since centralizer sizes are easier to compute than character degrees. The graph, Figure \ref{fig:avmultcc}, is a smooth exponential-like curve as well, suggesting an asymptotically accurate formula might even be findable.\footnote{Although it looks like an exponential curve, it cannot of course be strictly exponential because $k(S_n)$ is not exponential in $n$, it asymptotically approaching $e^{a\sqrt n}/bn$, as mentioned in a previous footnote.}
\ThisAltText{Graph of the number of conjugacy classes divided by the number of distinct conjugacy class sizes for n up to 85. The graph is very smooth and curves upwards in an superpolynomial-like way.}
\begin{figure}\centering \includegraphics[width=0.9\textwidth]{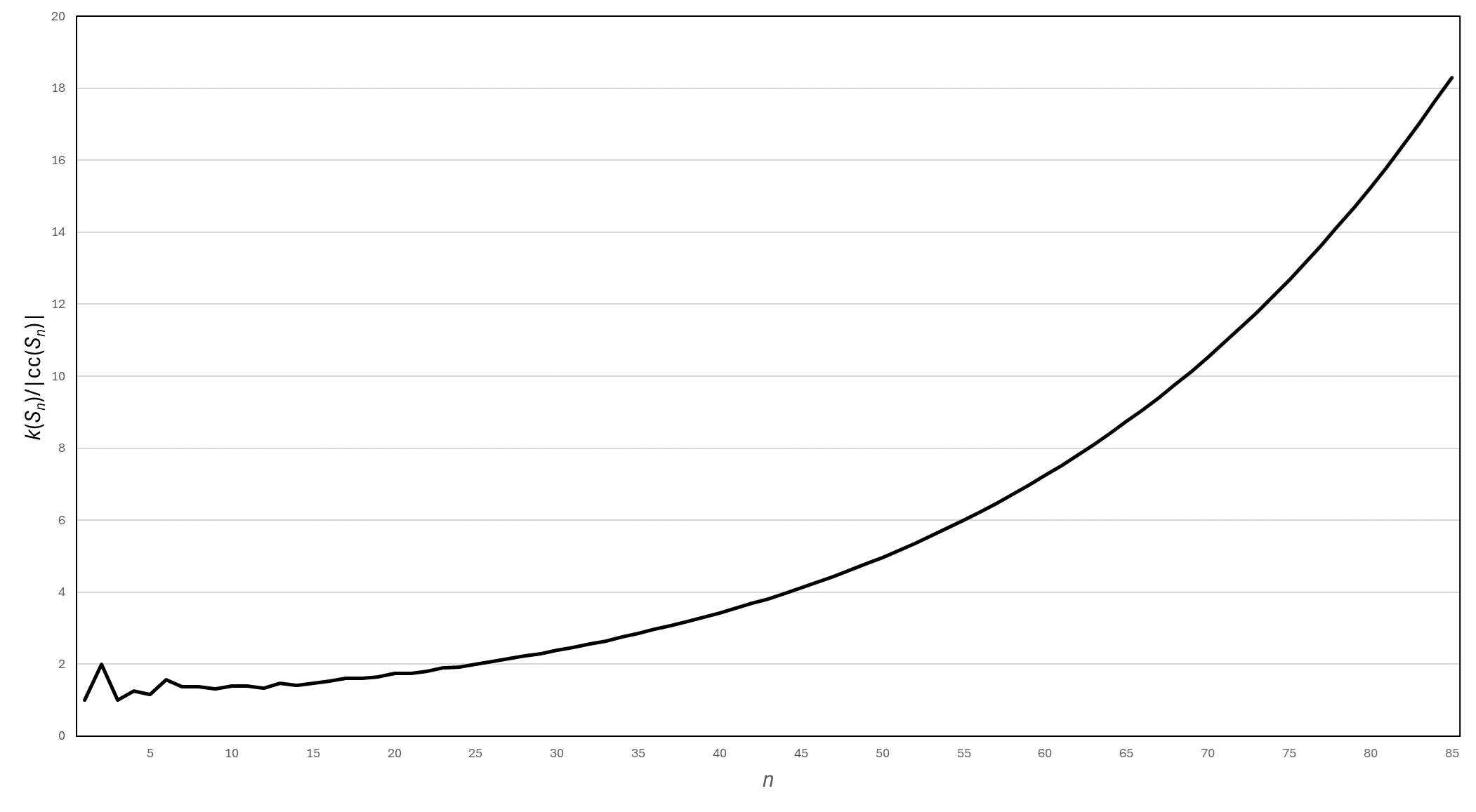}\caption{Graph of the ratio $k(S_n)/|\cc(S_n)|$ for $1\leq n\leq 85$.}\label{fig:avmultcc}
\end{figure}

We note that Conjectures \ref{conj:avmult} and \ref{conj:avmultcc} hold for $p$-groups and soluble groups with trivial Frattini subgroup \cite[Lemmas 2.2 and 2.3]{moreto2023}, and Lie-type groups of bounded rank by \cite[Lemma 6.3]{moreto2007}. Conjecture \ref{conj:avmult} is true for simple groups of Lie type by \cite[Corollary 1.6]{liebeckshalev2005}.

\medskip

Having discussed the first column of the character table, one could ask the question about more than one column. A question of Alexander Ryba was whether there are irreducible characters $\lambda$ and $\mu$ of $S_n$ for $n\geq 2$ such that $\lambda(1)=\mu(1)$ \emph{and} $\lambda(g)=\mu(g)\neq 0$ for any $g$ of cyclic type $2,1^{n-2}$ (so that $\lambda$ and $\mu$ are not conjugate)? The first such case is for $S_{12}$, where the characters associated to the partitions $(7,1,1,1,1,1)$ and $(4,4,4)$ have the same character degree and value on an involution. By $S_{30}$ there are such 70 pairs of partitions, and by $S_{40}$ there are 314. The first $n$ for which there are two characters with the same value on the conjugacy classes of $1$, $(1,2)$ and $(1,2,3,4)$ simultaneously is $35$.

In the other direction, there are four characters of $S_{32}$ that share the same non-zero values on $1$ and $(1,2)$ (so that they are not partitions and their conjugates), leading to the following reasonable-sounding conjecture.

\begin{conj} Let $X$ be a finite set of permutations and let $k\geq 1$ be an integer. For all sufficiently large $n$, there exist $k$ distinct irreducible characters of $S_n$ whose values on permutations $g\in S_n$ coincide for all $g$ of cycle type in $X$.
\end{conj}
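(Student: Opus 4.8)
The plan is to reduce the statement to a strengthening of Theorem~\ref{thm:boundsformultisets}, or rather to an analogous periodicity phenomenon, but now at the level of full rows of the character table rather than just the degree. Fix the finite set $X$ of permutations; since permutations in $X$ move only finitely many points, each $g\in X$ has a well-defined cycle type $\mu_g$ as a partition of some $m_g$, padded by fixed points, and for $n$ large the value $\chi_\lambda(g)$ depends only on $\lambda$ and $\mu_g$ via the Murnaghan--Nakayama rule. The key observation is that if we take a periodic cluster $\Lambda=\{\lambda^{(1)},\dots,\lambda^{(k)}\}$ of period $p$ (in the sense recalled in Section~\ref{sec:sn}), and if $p$ is chosen larger than the maximum number of points moved by any element of $X$, then adding $n$ boxes to the first $p$ rows does not affect the Murnaghan--Nakayama computation of $\chi_\lambda(g)$ for $g\in X$: every border strip of size at most $\max_{g}(\text{points moved})<p$ that can be removed lives entirely in a region that is ``stable'' under the box-addition operation, because the shape of the partition near the rim, in the columns that matter, is unchanged once the first $p$ rows have been made long enough. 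So first I would make this precise: show that for a periodic cluster of period $p$, the vector $(\chi_{\lambda^{(1)}}(g),\dots,\chi_{\lambda^{(k)}}(g))$ is eventually constant in $n$ for each fixed conjugacy class (fixed cycle type plus fixed points), with ``eventually'' meaning once the parts $\lambda_1,\dots,\lambda_p$ exceed a bound depending only on $X$.

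Granting that, the next step is existence: I need, for each residue class modulo some period $p$, a periodic cluster of order $k$ and period $p$ whose members all agree on the classes indexed by $X$. Here the freedom is that $H(\lambda)=H(\mu)$ for cluster members already forces equality of degrees (the class of the identity), but agreement on a nontrivial class $g$ is a genuinely new constraint. The idea is to enlarge $k$: starting from a cluster of order $k_0$ where the row-vectors over the classes in $X$ take at most $N$ distinct values, a pigeonhole/amplification argument produces, after passing to disjoint ``shifted copies'' or to clusters built inside larger ambient $n$ via the periodicity machinery, a sub-collection of size $k$ on which all these vectors coincide --- provided $k_0$ can be taken arbitrarily large. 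That in turn needs an unbounded supply of cluster members, which is exactly the content of $m(n)\to\infty$ from \cite{craven2008}, made periodic: I would invoke (a strengthening of) the constructions behind Theorem~\ref{thm:boundsformultisets} to get periodic clusters of arbitrarily large order, then within one such cluster apply pigeonhole over the finitely many possible $X$-value-vectors. To cover all residues modulo the chosen period one repeats the construction nine-fold (or $p$-fold) as in the proof of Theorem~\ref{thm:boundsformultisets}.

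The main obstacle is the existence of periodic clusters of \emph{arbitrarily large order}: Theorem~\ref{thm:boundsformultisets} only delivers order up to $8$ explicitly, and the remark after it makes clear that going higher is genuinely hard and currently done only by ad hoc computer search. So the honest version of this proof would have to either (a) extract from the method of \cite{craven2008} an explicit construction of periodic clusters of every order --- which the excerpt suggests is not currently available, since even $m(n)\geq 10$ is stated as conjectural in the periodic regime --- or (b) weaken the route: use the fact that $m(n)\to\infty$ (non-periodically) together with a compactness/stabilization argument to show that large multiplicities of degree \emph{force} large multiplicities of full $X$-restricted character values, by another pigeonhole over the bounded list of possible $X$-vectors attached to a given degree. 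Option (b) is cleaner and avoids periodicity entirely: among the (unboundedly many, for large $n$) irreducible characters of a fixed degree $d$, each carries an $X$-value-vector from a finite set whose size is bounded in terms of $d$ and $X$; but if instead one fixes the \emph{degree sequence} and uses that $m(n)$ grows, one still needs the number of characters of \emph{some single} degree to outgrow the number of possible $X$-vectors, and controlling that uniformly in $n$ is precisely where I expect the real difficulty, and where a careful reading of the quantitative bounds in \cite{craven2008,craven2010a} would be required.
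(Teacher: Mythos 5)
This statement is a conjecture in the paper, not a theorem: no proof is given, and the surrounding discussion only points to possible ingredients (Wildon's result that irreducible characters of $S_n$ agreeing on all $i$-cycles coincide, and Kerov's character polynomials), so there is no paper argument to compare yours against. Your write-up is honest that it is a plan with a gap rather than a proof, and that gap (the lack of periodic clusters of arbitrarily large order, or of any mechanism to make $m(n)\to\infty$ interact with a bounded pigeonhole) is indeed real. But there are two earlier flaws in the sketch that would sink it even if that gap were filled.

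First, the stabilization claim in your opening step is false as stated. For a periodic cluster the vector $(\chi_{\lambda^{(1)}}(g),\dots,\chi_{\lambda^{(k)}}(g))$ is \emph{not} eventually constant in $n$: already for $g=1$ the common value is the degree, which grows with $n$, and for a general $g$ of fixed cycle type (padded by fixed points) the Murnaghan--Nakayama rule reduces $\chi_\lambda(g)$, after peeling off the short nontrivial cycles, to $\pm$ a dimension of a partition of $n-O(1)$, which again grows. So the border-strip-in-a-stable-region picture does not give constancy of values, only (at best) of signs. Second, and more decisively, clusters are the wrong objects here because they include conjugate pairs, and $\chi_{\lambda'}(g)=\operatorname{sgn}(g)\chi_\lambda(g)$. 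Whenever $X$ contains an odd permutation (say a transposition, as in the $S_{32}$ example the paper cites), $\lambda$ and $\lambda'$ have opposite values unless that value is $0$. Thus the cluster $\{\lambda,\lambda',\mu,\mu'\}$ does not, in general, have even two members with equal $X$-vector; the paper's $S_{32}$ example explicitly avoids conjugate pairs. Your pigeonhole fallback can't repair this, since it requires the number $N$ of possible $X$-vectors within a fixed-degree class to stay bounded while the order of the cluster grows, and $N$ grows with $n$ along with the character values. So the proposal has both a wrong step (stabilization) and an essential missing idea (a source of equal values, not merely equal degrees), beyond the gap you flagged.
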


This conjecture contrasts with a recent result of Miller \cite{miller2024un} which is that, in some sense, you don't need many character values of $\chi^\lambda$ to determine $\lambda$. Why this does not contradict this conjecture is because in Miller's algorithm you can choose the next class to examine the character value for based on the previous known character values, whereas in the conjecture you have to choose the classes in advance.

A result of Wildon \cite[Proposition 2.1]{wildon2008} states that if $\chi$ and $\psi$ are irreducible characters of $S_n$, and $\chi$ and $\psi$ agree on all $i$-cycles for $1\leq i\leq n$, then $\chi=\psi$. Thus we need only consider cycles in the previous conjecture, where the Murnaghan--Nakayama and Frobenius formulae are of a simpler form. Work around Kerov's character polynomials (see \cite[Section 1.2]{gouldenrattan2007}), which specifically consider character values on cycles, might be of use when attacking this problem.

There is also an alternative formulation of the previous conjecture that seems more attractive. That $|S_n|$ is bounded in terms of $m(n)$ is the case $m=1$ of this next conjecture. It is unclear to the author whether even the case $n=2$ follows from the construction for the case $m=1$ from \cite{craven2008}.

\begin{conj} Let $m,k\geq 1$. For all sufficiently large $n$, there are $k$ (distinct) irreducible characters of $S_n$ whose restrictions to $S_m$ coincide.
\end{conj}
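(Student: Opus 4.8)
The plan is to transport the statement into the combinatorial framework of \cite{craven2008}. Iterating the branching rule expresses the restriction of $\chi_\lambda$ to $S_m$ as $\sum_{\nu\vdash m}f^{\lambda/\nu}\chi_\nu$, where $f^{\lambda/\nu}$ denotes the number of ways of building $\lambda$ up from $\nu$ by adding one box at a time; so the conjecture asks, for each $m$ and $k$, for $k$ partitions of $n$ on which all of the finitely many numbers $f^{\lambda/\nu}$, $\nu\vdash m$, are constant. Since the restriction of $\chi_\lambda$ to $S_m$ records exactly the values $\chi_\lambda(g)$ for the $g\in S_n$ moving at most $m$ points, this is precisely the uniform-in-$n$ version of Ryba's question discussed above, with the fixed finite set of cycle types taken to be everything supported on $m$ points. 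By the classical formula of Frobenius for the value of $\chi_\lambda$ on a single cycle -- equivalently, the Jucys--Murphy description of these values in terms of box contents -- the restriction of $\chi_\lambda$ to $S_m$ is determined by the degree $f^\lambda$ together with the content power sums $p_j(\lambda)=\sum_{\square\in\lambda}c(\square)^j$ for $1\le j\le m-1$, where $c(\square)=b-a$ for a box in row $a$ and column $b$. Calling a set of partitions of $n$ on which $f^\lambda$ and all of $p_1,\dots,p_{m-1}$ are constant an \emph{$m$-cluster}, the conjecture is exactly the assertion that for each $m$ and $k$ there is an $m$-cluster of order at least $k$ and of every sufficiently large size $n$. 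The case $m\le 1$ has no content conditions and is just $m(n)\to\infty$, proved in \cite{craven2008} in the strong form -- periodic hook-length clusters -- that supplies all large $n$; and since restriction to $S_m$ factors through restriction to $S_{m'}$ for every $m'\ge m$, one cannot trade $m$ downwards, so one must build $m$-clusters for the given $m$ and all large $n$. Following the proof of Theorem \ref{thm:boundsformultisets}, this means producing a periodic $m$-cluster for each residue class modulo some period $p=p(m,k)$, combining these by passing to a common period, and clearing up the remaining small $n$ by computer.

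The first thing I would try is to graft content conditions onto the periodic-cluster machinery of \cite[Theorem 4.2]{craven2008}. A periodic hook-length cluster already has constant degree, and that degree stays constant under the operation of increasing each of the first $p$ parts of every member by a large common amount $t$; moreover each $p_j$ transforms transparently under that operation, changing by an explicit polynomial in $t$ whose coefficients involve only the lower content power sums of the first $p$ rows and, separately, those of the remaining rows. Consequently a periodic hook-length cluster of period $p$ remains an $m$-cluster under all of these translations as soon as, across the cluster, the content power sums up to order $m-1$ of the whole partition and of the tail (the rows beyond the first $p$) coincide -- a finite, easily checked list of extra polynomial identities to impose alongside the two criteria of \cite[Theorem 4.2]{craven2008}. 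One then hunts for periodic hook-length clusters meeting these conditions, of unbounded order and covering all residue classes, exactly as clusters of orders $4$ and $8$ were exhibited in the proof of Theorem \ref{thm:boundsformultisets}.

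The main obstacle is the existence of such families. Producing ordinary clusters of unbounded order is already delicate, the scheme above piles $m-1$ further polynomial constraints on top, and there is no a priori reason that these can be met simultaneously with unbounded order in every residue class. Worse, the hook-cluster route is not exhaustive: the smallest pair of partitions with equal degree and equal total content, namely $(7,1^5)$ and $(4,4,4)$ at $n=12$, do \emph{not} have equal hook multisets. So one may instead be forced to attack the $m$-cluster equations directly -- solving the identities among $f^\lambda=n!\prod_{i<j}(\beta_i-\beta_j)/\prod_i\beta_i!$ and the $p_j$, written in terms of the first-column hook lengths $\beta_i=\lambda_i+r-i$ (with $r$ the number of parts) -- in explicit parametric families, in the spirit of the non-periodic clusters alluded to after Theorem \ref{thm:boundsformultisets}. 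Either way the crux is to control the degree, an irregular global quantity with no translation-stable product formula outside the (too rigid) cluster setting, simultaneously with the content moments and uniformly in $n$; even the first genuinely new case -- an $m$-cluster of order $2$ with $m=2$ for every large $n$, i.e.\ a pair of partitions of each large $n$ with equal degree and equal content sum -- is not obviously within reach, and this is where I expect any clean argument to break down.
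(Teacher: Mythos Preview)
The statement you have been asked to ``prove'' is a \emph{conjecture}; the paper offers no proof and indeed remarks immediately before stating it that ``it is unclear to the author whether even the case $m=2$ follows from the construction for the case $m=1$ from \cite{craven2008}.'' So there is no paper proof to compare against, and your proposal---which is explicitly a plan of attack together with an honest assessment of where it fails---is the appropriate genre of response.

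On the substance: your reformulation is correct and goes further than the paper. Restriction to $S_m$ is determined by the values on classes supported on $m$ points, and via the Frobenius formula (equivalently, Kerov's character polynomials, which the paper also points to) these are controlled by the degree $f^\lambda$ together with the content power sums $p_1(\lambda),\dots,p_{m-1}(\lambda)$. Your notion of an ``$m$-cluster'' is exactly the right object, and your observation that the translation operation $\lambda_i\mapsto\lambda_i+t$ on the first $p$ rows shifts each $p_j$ by a polynomial in $t$ with coefficients in the lower content moments is the natural way to try to make $m$-clusters periodic in the sense of \cite[Theorem~4.2]{craven2008}. Your example $(7,1^5)$ versus $(4,4,4)$ is precisely the pair the paper singles out as the first Ryba coincidence, and your point that it is \emph{not} a hook-multiset coincidence is a genuine warning that the cluster machinery may be too rigid.

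Your identification of the obstacle is also in line with the paper's: once content constraints are layered onto the already delicate existence problem for large periodic clusters, nothing guarantees solutions of unbounded order in every residue class, and the degree condition has no translation-stable closed form outside the hook-multiset setting. That is exactly why the paper leaves this open. There is no gap in your write-up in the sense of a false step; rather, as you say yourself, the argument simply does not close, and that is the current state of affairs.
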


\subsection{General linear groups}

If one considers generic degrees for unipotent characters of $\GL_n(q)$ rather than symmetric groups, the conjugate partition does not yield a character of the same degree in general, as Lusztig's $a$-function should differ between a partition and its conjugate. If $n=15$ then the partition $(6,3,2,2,2)$ has the property that it and its conjugate have the same $a$-function, which takes value $21$. This is the smallest such coincidence: there are two for $n=16$, none for $n=17,18,23$, and at least one for all larger numbers. At the moment there is no general theory for generic degree multiplicities, so one cannot say much about such partitions. We can at least prove that there are always a pair of partitions with the same generic degree for all $n\geq 24$.

\begin{prop}\label{prop:alwaystwogln} If $n=15,16,19,20,21,22$ or $n\geq 24$ then there is a partition $\lambda$ such that $a(\lambda)=a(\lambda')$, where $\lambda'$ is the conjugate partition of $\lambda$. Otherwise there is no such partition.
\end{prop}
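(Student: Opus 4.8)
The first step is to rewrite the condition in a form convenient both for explicit constructions and for a finite machine check. Counting the contribution of each box to $a(\lambda)=\sum_i(i-1)\lambda_i$ column by column gives the standard identity $a(\lambda)=\sum_j\binom{\lambda'_j}{2}$, so $a(\lambda)=a(\lambda')$ is equivalent to $\sum_j\binom{\lambda'_j}{2}=\sum_j\binom{\lambda_j}{2}$, and since $\sum_j\lambda_j=\sum_j\lambda'_j=n$ this is just $\sum_j\lambda_j^2=\sum_j(\lambda'_j)^2$. A self-conjugate partition satisfies this tautologically, so what the statement really asks for is a partition with $\lambda\neq\lambda'$ and $a(\lambda)=a(\lambda')$ (equivalently, a pair of \emph{distinct} unipotent characters of the same generic degree), and I will phrase everything that way. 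In this form the ``otherwise'' direction is a bounded computation: for each $n\in\{1,\dots,14,17,18,23\}$, enumerate the partitions of $n$ and check that $\sum_j\lambda_j^2=\sum_j(\lambda'_j)^2$ forces $\lambda=\lambda'$.

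The positive direction rests on one bordering construction. The plan is to prove: if $\lambda=(\lambda_1,\dots,\lambda_r)$ is a partition of $n$ with $\lambda\neq\lambda'$, $a(\lambda)=a(\lambda')$, and $h\geq\max(\lambda_1,\lambda'_1)$, then
\[ \mu=(h+1,\ \lambda_1+1,\ \lambda_2+1,\ \dots,\ \lambda_r+1,\ \underbrace{1,\dots,1}_{h-r}), \]
obtained by framing the Young diagram of $\lambda$ with a first row and a first column each of length $h+1$, is a partition of $n+2h+1$ with $\mu\neq\mu'$ and $a(\mu)=a(\mu')$. To see this one does the framing in two steps. Prepending a column of height $h$ to $\lambda$ leaves every row index unchanged and so adds $\binom h2$ to $a$, while on the conjugate side it is ``prepend a row of length $h$'', which shifts every box down one row and so adds $n$ to $a$ of the conjugate; then prepending a row of length $h+1$ adds the current size $n+h$ to $a$, and $\binom{h+1}{2}$ to $a$ of the conjugate. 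Hence $a(\mu)-a(\mu')=\bigl(a(\lambda)-a(\lambda')\bigr)+\binom h2-n+(n+h)-\binom{h+1}{2}=\bigl(a(\lambda)-a(\lambda')\bigr)$, since $\binom{h+1}{2}=\binom h2+h$. Finally $\mu'$ is given by the same formula with $\lambda$ replaced by $\lambda'$, with the framing row and column untouched, so $\mu=\mu'$ exactly when $\lambda=\lambda'$.

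Granting this lemma, one bootstraps from small explicit seeds. From $\lambda=(6,3,2,2,2)$ (a partition of $15$ with $\lambda'=(5,5,2,1,1,1)$ and $a(\lambda)=a(\lambda')=21$), applying the construction once with $h=6,7,8,\dots$ produces a valid partition of every even $n\geq 28$; from the partition $(6,3,3,2,2)$ of $16$ (conjugate $(5,5,3,1,1,1)$, $a=a'=23$) one similarly gets every odd $n\geq 29$. So every $n\geq 28$ is covered, and it remains to exhibit explicit partitions for the finitely many sizes $n\in\{15,16,19,20,21,22,24,25,26,27\}$ --- for instance $(6,3,2,2,2)$ for $15$, $(6,3,3,2,2)$ for $16$, $(9,2,2,2,2,2,2)$ for $21$, and likewise for the remaining seven sizes, each found by a short search --- which together with the negative check from the first paragraph proves the proposition.

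The framing construction disposes of all large $n$ with room to spare, so I expect the delicate points to lie around the edges. One is bookkeeping: checking that the union of the seed-plus-framing families with the finite list of sporadic partitions really is $\{15,16,19,20,21,22\}\cup\{n\geq 24\}$ with no value of $n$ missed, and in particular that the small sporadic sizes are genuinely handled by hand, since one application of the framing construction increases the size by at least $13$ and always outputs a $\lambda$ with $\lambda$ and $\lambda'$ far apart, so it cannot itself reach them. The other is the negative direction: although finite, it needs an honest enumeration (the $n=23$ case is not tiny), and one must be careful that ``there is no such partition'' is being asserted only among non-self-conjugate partitions, since the literal statement fails for every $n\geq 3$.
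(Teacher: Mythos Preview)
Your proof is correct, and its overall architecture matches the paper's: a generic construction covers all $n\geq 28$, a short explicit list handles the sporadic sizes below that, and a finite enumeration disposes of the negative direction. Your identification of the intended hypothesis $\lambda\neq\lambda'$ and your reformulation $a(\lambda)=a(\lambda')\iff\sum_j\lambda_j^2=\sum_j(\lambda'_j)^2$ are both nice touches absent from the paper.

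The one genuine difference is in the generic construction. The paper does not frame; it starts from the seed $\lambda=(7^2,4,3^3,1)$ of size $28$ and forms $\lambda^{(m)}$ by adding $m$ to the first part and appending $m$ parts equal to $1$, producing partitions of $28+2m$. This preserves $a(\lambda)-a(\lambda')$ only because the seed happens to satisfy $\lambda_1=r$ (first part equals number of parts), so that the operation commutes with conjugation; the odd sizes come from the companion seed $(7^2,4^2,3^2,1)$ of size $29$, obtained by inserting the diagonal box at $(4,4)$. Your framing lemma is more robust---it works from \emph{any} seed with $a=a'$, with no shape constraint---at the cost of increasing the size by at least $2\max(\lambda_1,r)+1$ rather than by $2$. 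In practice both routes land at the same threshold $n\geq 28$ (you from the size-$15$ and size-$16$ seeds with $h\geq 6$, the paper directly from its size-$28$ and size-$29$ seeds), so the trade-off is a wash here, but your lemma would be the more reusable tool.
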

\begin{proof}
We can prove the result generically for $n\geq 28$. The partition $\lambda=(7^2,4,3^3,1)$ has conjugate $\lambda'=(7,6^2,3,2^3)$. A direct check shows that $a(\lambda)=a(\lambda')=57$. We can add $m$ to the first part and $m$ parts of size $1$ to $\lambda$ to make the partition $\lambda^{(m)}$. We have that \[a(\lambda^{(m)})=57+(7+8+\cdots+(m+6)),\]
and this is clearly the same for the conjugate partition. Thus there is a partition of $28+2m$ with the desired property for all $m\geq 0$.

Moreover, there is an addable box to $\lambda$ in the $(4,4)$ position. This gives the partition $\mu=(7^2,4^2,3^2,1)$, with conjugate $\mu'=(7,6^2,4,2^3)$. As the box lies in the diagonal, $a(\mu)=a(\mu')=60$. We can form the partition $\mu^{(m)}$ in analogy with $\lambda^{(m)}$ by adding a box in the $(4,4)$ position to $\lambda^{(m)}$ (or by extending the first row and column of $\mu$) to obtain a partition with the desired property for all odd integers at least $29$.

For integers less than $28$, we simply list partitions with the desired property.
\[ (6,3,2^3), \quad (6,3^2,2^2),\quad (7,3^3,2,1),\quad (6,4^2,3^2),\quad (6,4^3,3),\quad (8,4,3,2^3,1),\]
\[ (8,4^2,3,2^2,1),\quad (8,4^3,2^2,1),\quad (9,4,3^3,2^2),\quad (8,5^2,3,2^3).\]

Proving that no other integer works requires checking all partitions for each individual $n$, which uses a computer (or can be done by hand for very small $n$).
\end{proof}

On the other hand, using the database of all clusters of partitions of order at least 4 and size at most 90 in the \texttt{symmetric} package, we find that only 74 of the 13814 such clusters of partitions have unipotent generic degrees that are not all distinct, so making the multiset of hook lengths \emph{and} the $a$-function coincide is much more restrictive a condition. The general methods and constructions from \cite{craven2008} give no control over the $a$-function, so this area is wide open. It seems ambitious to offer a conjecture on no evidence at all, so instead we should phrase it as a question.

\begin{ques} Does the maximal multiplicity of a generic degree among unipotent characters of $\GL_n(q)$ tend to infinity as $n$ tends to infinity? Does it even exceed $2$?
\end{ques}

It seems reasonable that the maximal multiplicity will, at some point, exceed $2$, although it does not for $n\leq 90$. (We can check this because all clusters of size at most $90$ are known, and in the \texttt{symmetric} package database.) It is not remotely clear at this stage whether the generic degree multiplicities should tend to infinity, or even be unbounded.

\section{Largest degrees for symmetric groups}
\label{sec:chardegs}
We now look at the largest degrees of symmetric groups. The first article about this was by McKay \cite{mckay1976}, who computed $b_1(n)$ for $n\leq 75$. In the \texttt{symmetric} package we list the values for $b_1(n)$, $b_2(n)$ and $b_3(n)$ for $n\leq 150$, together with the partitions that reach these values. This allows us to consider the fraction $b_i(n)/b_1(n)$ for $i=2,3$.

As stated in the introduction, Halasi, Hannusch and Nguyen \cite{hhn2016} proved that the sums of the squares of all irreducible character degrees other than the largest exceeds twice the square of the largest degree.

Figure \ref{fig:b3b1} gives the ratio $b_3(n)/b_1(n)$ for $n$ up to $150$, and from this it is easy to see that $b_3(n)/b_1(n)$ and $b_2(n)/b_1(n)$ are bounded below by $0.8$ for all $52\leq n\leq 150$, and highly likely that this is true for all larger $n$. If $b_3(n)$ and $b_2(n)$ are each achieved by a unique (necessarily self-conjugate) character then the sum of the squares of character degrees that are $b_2(n)$ or $b_3(n)$ is less than $2b_1(n)^2$, and otherwise it exceeds $2b_1(n)^2$. (See Conjecture \ref{conj:chardegssquares} below.)
\ThisAltText{Graph of the ratio of the third largest character degree over the largest character degree for n up to 150.\ Very jagged but steadily increasing until all values are above 0.85 for large n.}
\begin{figure}[htbp] \centering \includegraphics[width=0.9\textwidth]{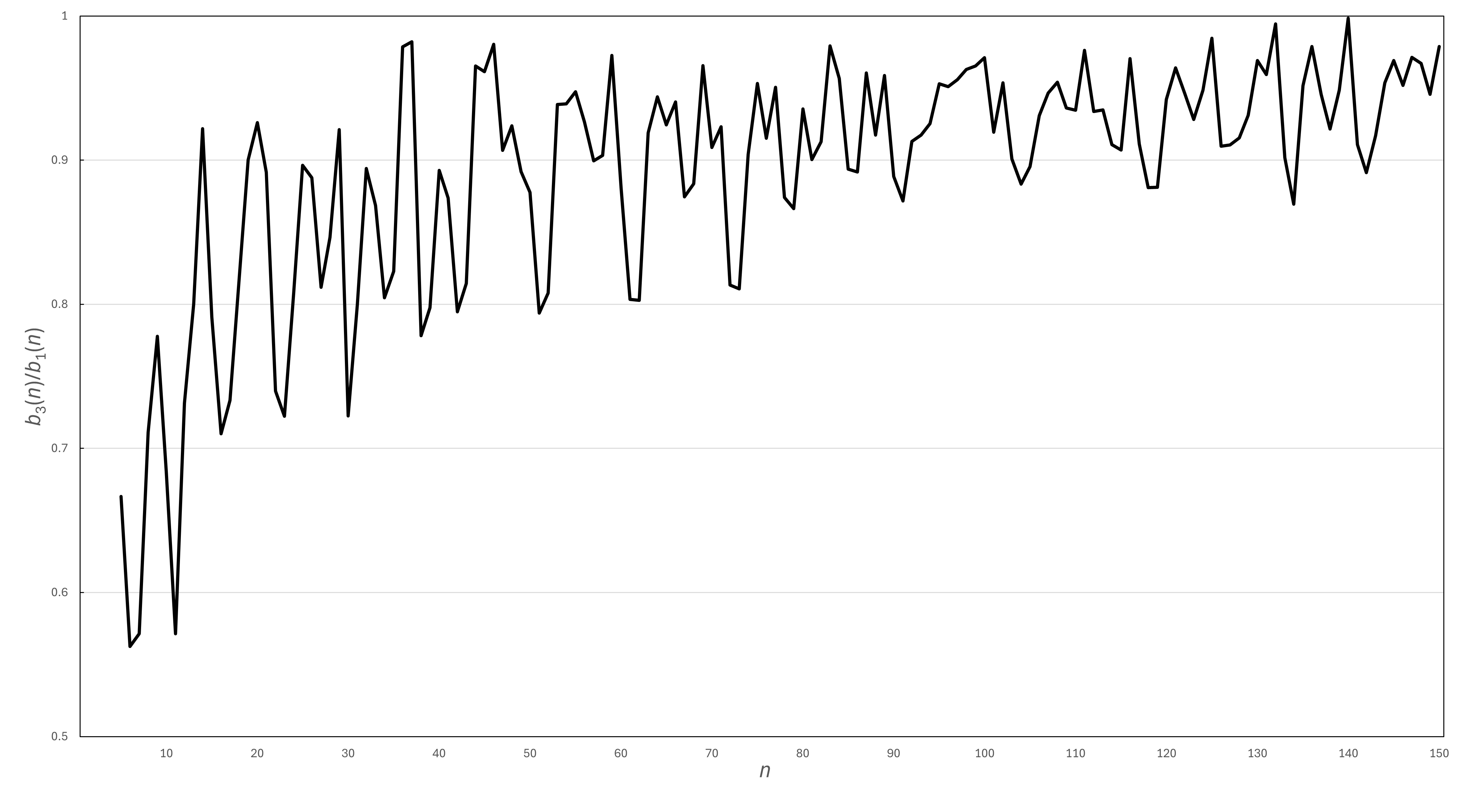}\caption{Graph of the ratio $b_3(n)/b_1(n)$ for $5\leq n\leq 150$.}\label{fig:b3b1}
\end{figure}

Can both the second- and third-largest degrees be achieved by unique characters? Yes. The cases $n=19,59$ are the only ones for which this occurs up to $n=150$, but there is no reason to believe that it will not occur again. Thus we have the following conjecture, where we have to exclude some small $n$.

\begin{conj}\label{conj:chardegssquares} Let $n$ be an integer with $n\geq 5$, $n\neq 6,7$.
\begin{enumerate}
\item If $n\neq 11$, then $b_1(n)^2\leq b_2(n)^2+b_3(n)^2$.
\item If $n\neq 6,7,11,16,38$, and $b_2(n)$ or $b_3(n)$ is achieved by two or more irreducible characters of $S_n$, then
\[ \sum_{\substack{\chi\in\Irr(S_n)\\\chi(1)\in \{b_2(n),b_3(n)\}}}\chi(1)^2>2b_1(n)^2.\]
\item If $b_2(n)$ and $b_3(n)$ are both achieved by a unique irreducible character of $S_n$ then
\[ \sum_{\substack{\chi\in\Irr(S_n)\\\chi(1)\in \{b_2(n),b_3(n),b_4(n)\}}}\chi(1)^2>2b_1(n)^2.\]
\item If $n\geq 62$ then $2b_1(n)^2\leq b_2(n)^2+b_3(n)^2+b_4(n)^2$.
\end{enumerate}
\end{conj}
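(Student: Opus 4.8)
\emph{Plan.} The four parts are linked by elementary inequalities to a single estimate, so the plan is to prove that there are a constant $\varepsilon>0$ and a bound $N_0$ with
\[ b_4(n)\geq\bigl(\sqrt{2/3}+\varepsilon\bigr)\,b_1(n)\qquad\text{for all }n\geq N_0,\]
and then to deduce Conjecture \ref{conj:chardegssquares} for $n\geq N_0$ by hand, leaving finitely many $n$ to a computation. Writing $\beta_i=b_i(n)/b_1(n)$, so that $1>\beta_2\geq\beta_3\geq\beta_4$, the estimate gives $3\beta_4^2>2$, and then: part (iv) is $\beta_2^2+\beta_3^2+\beta_4^2\geq 3\beta_4^2>2$; part (i) is $\beta_2^2+\beta_3^2\geq 2\beta_4^2>\tfrac43>1$; part (iii) is again $\beta_2^2+\beta_3^2+\beta_4^2\geq 3\beta_4^2>2$ (so the uniqueness hypothesis is not even used for large $n$); and for part (ii), since $b_2(n)$ and $b_3(n)$ each occur with multiplicity at least one and at least one of them with multiplicity at least two, the relevant sum of squares is at least $\min(2b_2^2+b_3^2,\,b_2^2+2b_3^2)=b_2^2+2b_3^2\geq 3b_4^2>2b_1^2$ (using $b_2\geq b_3\geq b_4$). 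Thus all four parts hold for $n\geq N_0$, irrespective of the detailed multiplicity structure. For $5\leq n<N_0$ (and for $62\leq n<N_0$ in part (iv)), excluding the listed exceptional values, one verifies the inequalities directly: the \texttt{symmetric} Magma package \cite{magma} already tabulates $b_1,b_2,b_3$ and their witnessing partitions for $n\leq150$, and recording $b_4$ and the multiplicities of $b_2,b_3$ is routine; for part (iii) one uses in addition the elementary remark that a character degree achieved by a \emph{unique} irreducible character comes from a self-conjugate partition.

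The whole difficulty therefore sits in the displayed estimate, an effective lower bound for $b_4(n)/b_1(n)$. This is weaker than the (open) conjecture that $b_m(n)/b_1(n)\to1$, but still substantial. I would approach it through the limit-shape philosophy of Logan--Shepp \cite{loganshepp1977} and Vershik--Kerov \cite{vershikkerov1977}: after rescaling by $\sqrt n$ the maximising partition $\lambda^{(n)}$ witnessing $b_1(n)$ approaches the unique maximiser of a strictly concave continuous functional, and one wants to show that the discrete landscape $\mu\mapsto\log\chi_\mu(1)$ on partitions of $n$ is flat enough near $\lambda^{(n)}$ that four distinct partitions of $n$ obtained from $\lambda^{(n)}$ by moving a bounded number of boxes all have degree at least $(\sqrt{2/3}+\varepsilon)b_1(n)$. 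Concretely, if $\mu$ comes from $\lambda$ by deleting a removable box and inserting an addable one (so $\mu\vdash n$), the hook formula gives
\[ \frac{\chi_\mu(1)}{\chi_\lambda(1)}=\frac{\prod_{h\in H(\lambda)}h}{\prod_{h\in H(\mu)}h},\]
and only the $O(\sqrt n)$ hook lengths in the affected rows and columns change; a naive count yields merely $\chi_\mu(1)/\chi_\lambda(1)\geq e^{-O(\sqrt n\log n)}$, which is worthless, so the real content is to exploit the vanishing of the first-order variation of $\log\chi_\lambda(1)$ at the maximiser to upgrade this, for a well-chosen box move, to a ratio bounded below by $\sqrt{2/3}+\varepsilon$ (ideally $1-o(1)$).

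\textbf{Main obstacle.} This effective second-order estimate near the limit shape is the crux: bounding $\chi_\mu(1)/\chi_{\lambda^{(n)}}(1)$ below by a useful constant needs both an effective description of where $\lambda^{(n)}$ has its corners and how the local geometry of the limit curve controls the sizes of the hook lengths that move, and an effective concavity (Hessian) bound for the continuous functional — neither appears in the literature, and the latter is precisely why even $b_2(n)/b_1(n)\to1$ is currently unproved. A secondary, practical obstacle is matching ranges: the analytic argument will presumably only become effective for $n$ somewhat beyond $150$, so either the computer verification (which McKay's data \cite{mckay1976} and the \texttt{symmetric} package make feasible) must be pushed up to whatever threshold $N_0$ the analysis yields, or the constants must be sharpened to bring $N_0$ down to $150$. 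If the full $1-o(1)$ rate proves intractable, a fallback worth pursuing is to prove only $b_4(n)\geq c\,b_1(n)$ for an explicit $c>\sqrt{2/3}$ by an \emph{ad hoc} construction of four near-maximal partitions — for instance via the branching rule, passing from $\lambda^{(n-1)}$ or $\lambda^{(n-2)}$ to partitions of $n$ by adding boxes and bounding the resulting degrees below by a fixed fraction of their sum — which would suffice for Conjecture \ref{conj:chardegssquares} without resolving the limit itself.
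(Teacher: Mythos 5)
The statement you address is labelled a \emph{conjecture} in the paper, and the paper does not prove it: it is stated on the basis of a computer verification up to $n=100$ together with the heuristic that $b_2(n)/b_1(n)$ and $b_3(n)/b_1(n)$ appear to approach $1$ (Conjecture \ref{conj:ratiobmtob1}). So your proposal is attempting more than the paper does. The elementary reduction you open with is sound: if one knew $b_4(n)\geq(\sqrt{2/3}+\varepsilon)\,b_1(n)$ for $n\geq N_0$, then since $b_2\geq b_3\geq b_4$ one gets $b_2^2+b_3^2\geq 2b_4^2>\tfrac43 b_1^2$ for (i), $b_2^2+b_3^2+b_4^2\geq 3b_4^2>2b_1^2$ for (iii) and (iv), and $\min(2b_2^2+b_3^2,\,b_2^2+2b_3^2)=b_2^2+2b_3^2\geq 3b_4^2>2b_1^2$ for (ii), leaving finitely many $n$ to a machine check.

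The genuine gap — which you candidly identify yourself — is that the bound $b_4(n)\geq(\sqrt{2/3}+\varepsilon)\,b_1(n)$ is not available. It is a weakened quantitative form of Conjecture \ref{conj:ratiobmtob1}, which the paper explicitly regards as open; the Logan--Shepp / Vershik--Kerov limit-shape theory gives only leading-order asymptotics for $\log b_1(n)$, and no effective second-order (Hessian-type) control of $\chi_\mu(1)/\chi_{\lambda^{(n)}}(1)$ for $\mu$ obtained by moving a bounded number of boxes is in the literature. Two further cautions before treating the remaining steps as ``routine'': the paper's own data only certifies $b_3(n)/b_1(n)\geq 0.8$ for $52\leq n\leq 150$, and $0.8<\sqrt{2/3}\approx 0.8165$, so even if the analytic estimate exists the crossover $N_0$ may lie well beyond the computed range, making ``match the ranges'' a real problem rather than a formality; and the fallback of constructing four near-maximal partitions by branching from $\lambda^{(n-1)}$ or $\lambda^{(n-2)}$ runs straight into the phenomenon the paper discusses at the end of Section \ref{sec:chardegs} — $\lambda^{(n)}$ does not behave predictably under adding or removing a bounded number of boxes (it can be arbitrarily far from self-conjugate, and $b_1(n)$ is not reliably obtained from $b_1(n-1)$ by adding a box), so bounding the degrees of the constructed partitions below by a fixed multiple of $b_1(n)$ is itself unresolved. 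As written, the proposal is a sensible research programme rather than a proof: it reduces the conjecture to an open effective asymptotic plus a computation over a range that is not pinned down.
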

For the last part of this conjecture, in the region $13 \leq n\leq 61$ the statement holds unless $n$ lies in
\[ \{15,16,17,22,23,27,30,34,38,42,51,61\}.\]
This conjecture has been checked up to $n=100$ (the value for $b_4(n)$ is not in the \texttt{symmetric} package, and had to be computed separately), and should hold for large $n$ as the ratios $b_3(n)/b_1(n)$ and $b_2(n)/b_1(n)$ approach $1$, so the inequality almost holds even without the contribution of $b_4(n)^2$.

\medskip

There have been a number of papers in the last few decades about large-degree characters, and a continuous version of the hook formula. We specifically mention \cite[Theorem 2]{vershikkerov1985}; this theorem states, loosely speaking, that the squares of the degrees of the `large-degree' characters of $S_n$ sum to $n!-o(n!)$, i.e., most of the order of the group. On the other hand, \cite[Theorem 1]{vershikkerov1985} bounds $b_1(n)$ well away from $\sqrt{n!}$, meaning that there must be unboundedly many large-degree characters of $S_n$.

The main barrier to these two theorems implying the following conjecture is that we do not know whether there are only boundedly characters for each of these large degrees $b_i(n)$.

\begin{conj}\label{conj:ratiobmtob1} For any $m\geq 2$, the ratio $b_m(n)/b_1(n)\to 1$ as $n\to\infty$.
\end{conj}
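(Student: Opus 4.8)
\textbf{Proof proposal for Conjecture \ref{conj:ratiobmtob1}.}

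The plan is to bootstrap the two Vershik--Kerov theorems cited just above into a statement about fixed submaximal degrees by showing that the degree spectrum near $b_1(n)$ cannot be ``sparse'' in the multiplicative sense. Suppose for contradiction that for some fixed $m\geq 2$ there is a constant $c<1$ and an infinite set of $n$ with $b_m(n)/b_1(n)\leq c$. Then for those $n$ there are at most $m-1$ irreducible characters of degree exceeding $c\cdot b_1(n)$, so the contribution of \emph{all} characters of degree at least $c\cdot b_1(n)$ to $\sum_\chi \chi(1)^2 = n!$ is at most $(m-1)b_1(n)^2$. By \cite[Theorem 1]{vershikkerov1985}, $b_1(n)^2 = e^{-\Theta(\sqrt n)}\cdot n!$ (i.e. $b_1(n)$ is bounded well away from $\sqrt{n!}$), so $(m-1)b_1(n)^2 = o(n!)$; hence \emph{almost all} of the mass $n!$ sits on characters of degree below $c\cdot b_1(n)$. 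First I would make this quantitative: let $N(t,n)$ be the number of irreducible characters of $S_n$ of degree at least $t\cdot b_1(n)$, and observe $t^2 b_1(n)^2\, N(t,n)\le n!$, so $N(t,n)\le t^{-2} (n!/b_1(n)^2) = t^{-2} e^{\Theta(\sqrt n)}$; the upper bound on $N$ is therefore huge, which is the wrong direction, so the crude second-moment bound alone is not enough and one genuinely needs the finer structure of \cite[Theorem 2]{vershikkerov1985}.

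The substantive step is to show that the ``large-degree'' characters whose squares account for $n!-o(n!)$ actually cluster multiplicatively around $b_1(n)$, i.e. that for every fixed $\ep>0$ the characters of degree in $[(1-\ep)b_1(n),\,b_1(n)]$ already carry $(1-o_\ep(1))\,n!$ of the total mass, and moreover that their \emph{number} grows without bound. For this I would return to the Logan--Shepp / Vershik--Kerov variational picture \cite{loganshepp1977,vershikkerov1977}: after rescaling a Young diagram by $\sqrt n$, the logarithm of $\chi_\lambda(1)$ is, to leading order $n\log n$, controlled by a concave functional $I(\omega)$ on limit shapes $\omega$ maximised uniquely at the Logan--Shepp curve $\Omega$, and the second-order ($\sqrt n$) correction is governed by a quadratic form in the fluctuation $\omega-\Omega$. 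Partitions within a $\delta$-neighbourhood (in the relevant norm) of $\Omega$ then have $\log \chi_\lambda(1) = \log b_1(n) - O(\delta^2\sqrt n) + (\text{lower order})$, so they contribute degrees in the window $[e^{-O(\delta^2\sqrt n)}b_1(n),\,b_1(n)]$. Choosing $\delta = \delta(n)\to 0$ slowly (e.g. $\delta^2\sqrt n\to 0$) forces all these partitions into the interval $[(1-\ep)b_1(n),\,b_1(n)]$ for $n$ large, and one needs that the \emph{number} of such partitions $\to\infty$ — which is clear since the neighbourhood of $\Omega$ of radius $\delta(n)$ contains arbitrarily many valid Young diagrams once $n$ is large (the lattice of partitions becomes fine on the $\sqrt n$ scale). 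Having $\ge m$ distinct degrees in $[(1-\ep)b_1(n),b_1(n)]$ contradicts $b_m(n)\le (1-\ep)b_1(n)$ for any fixed $\ep$, giving $b_m(n)/b_1(n)\to 1$. A subtlety to handle is that many nearby partitions could share the \emph{same} degree (this is precisely the clustering phenomenon of Section \ref{sec:mults}!), so counting distinct \emph{partitions} near $\Omega$ is not literally the same as counting distinct \emph{degrees}; one must argue that a $\delta(n)$-neighbourhood of $\Omega$ contains at least $m$ partitions with pairwise distinct hook-length products, for which it suffices to exhibit $m$ partitions in the neighbourhood no two of which are related by conjugation or by a known degree-coincidence, e.g. by a crude injectivity/dimension count on the number of attainable degrees versus the number of partitions in the window.

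The main obstacle I expect is making the second-order asymptotics of the hook-length formula rigorous and \emph{uniform} enough: one needs not merely that $\log b_1(n) = \tfrac12 n\log n - c\sqrt n + o(\sqrt n)$ (which is \cite[Theorem 1]{vershikkerov1985}) but a two-sided estimate $\log\chi_{\lambda^{(n)}}(1) - \log\chi_\mu(1) = O(\delta^2\sqrt n)$ valid for \emph{all} $\mu$ within rescaled distance $\delta$ of the optimizer, with the implied constant independent of $n$. The cited Vershik--Kerov results are stated as asymptotics for the maximum and for aggregate mass, not as a modulus-of-continuity statement for $\log\chi_\lambda(1)$ near the optimizer, so this quadratic-expansion lemma would have to be proved — presumably via a careful analysis of $\sum_{h\in H(\lambda)}\log h$ as a functional of the limit shape, as in the hydrodynamic/large-deviations treatments of the Plancherel measure. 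If that uniform expansion is available, the rest of the argument (counting partitions in a shrinking neighbourhood, converting partition count to degree count, and deriving the contradiction) is comparatively routine combinatorics.
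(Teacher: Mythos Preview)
The statement you attempt is labelled a \emph{Conjecture} in the paper, and the paper offers no proof: immediately before stating it, the author writes that ``as far as the author is aware this result is not known'', and immediately after, he explains that ``the main barrier to these two theorems implying the following conjecture is that we do not know whether there are only boundedly many characters for each of these large degrees $b_i(n)$.'' So there is no paper proof to compare against; the right question is whether your proposal closes that gap. It does not.

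Your outline is a faithful account of the heuristic the paper already describes (use Vershik--Kerov to populate a neighbourhood of the Logan--Shepp curve with many large-degree partitions), and you correctly isolate the two places where it falls short. The second of these is decisive. You need at least $m$ \emph{distinct degrees} in $[(1-\ep)b_1(n),b_1(n)]$, not $m$ distinct partitions; your suggested fix (``crude injectivity/dimension count on the number of attainable degrees versus the number of partitions in the window'') is exactly the step the paper says is missing. Section \ref{sec:mults} of the paper is entirely about how badly injectivity fails: multiplicities are known to grow, with $m(n)$ reaching $224$ already at $n=129$, and one would need to control the multiplicities of the \emph{specific} degrees near $b_1(n)$, for which nothing is known. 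Note also a slip in your opening paragraph: from $b_m(n)\le c\,b_1(n)$ you cannot conclude there are at most $m-1$ \emph{characters} of degree above $c\,b_1(n)$, only at most $m-1$ distinct \emph{degrees}; this is precisely why the multiplicity issue bites immediately.

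The first obstacle you name --- a uniform two-sided second-order expansion of $\log\chi_\lambda(1)$ near the optimizing shape --- is also genuinely open in the form you need. The cited Vershik--Kerov results give the leading asymptotic of $b_1(n)$ and an aggregate mass statement, not a modulus-of-continuity estimate for $\log\chi_\lambda(1)$ uniform over a shrinking neighbourhood. So your document is a fair research programme, matching the paper's own heuristic, but it is not a proof: both the analytic lemma and, more seriously, the degree-multiplicity control remain unresolved, and the latter is precisely the obstruction the paper flags.
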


Although there is a significant amount of noise in Figure \ref{fig:b3b1}, there is a clear trend. One can see this intuitively: the expectation is that, for large $n$, partitions that are close to the LSVK curve (which was independently discovered by Logan and Shepp \cite{loganshepp1977} and Vershik and Kerov \cite{vershikkerov1977}) should yield the largest-degree characters. For large $n$, there are many partitions that are arbitrarily close to this curve, so for any fixed $m$ there are $m$ characters of degree all very close to the maximum.

Despite several papers being written on the subject (most recently, for example, \cite{duzhinsmirnovmalsev2023}), we do not have an algorithm to give the largest degree, even conjecturally. Ideas like that the largest character degree should be parametrized by a partition $\lambda$ that is `very close' to a self-conjugate partition (as considered in \cite[Section 3]{vershikpavlov2010} and subsequent papers by this group of authors) appear perhaps to not hold in general, based on our evidence here. For example, if $n=96,109,123,138,148$ then one must remove four boxes to arrive at a self-conjugate partition. (These are the only such $n$ up to $150$.) This is despite the fact that $b_1(n)$ for $n=94,107,121,136$ being realized by a self-conjugate partition; in general, one cannot find $b_1(n)$ by adding a box to the partition realizing $b_1(n-1)$ and choosing the partition with largest degree, i.e., inducing the largest-degree character from $S_{n-1}$. (This is not a new observation.) The first $n$ for which one needs to add three boxes to a self-conjugate partition to obtain a partition realizing $b_1(n)$ is $n=37$, and then one needs three boxes for $n=45,54,83,95,97,102,108,110,122,124,137,139$ up to $150$.

Indeed, one actually needs to subtract five boxes from the partition witnessing $b_1(132)$ to obtain a self-conjugate partition, and then again for $n=147,149$. This leads the author to suspect that the answer to the following question is `no'.

\begin{ques}
Is there an integer $d\geq 5$ such that, if $\lambda$ is a partition of $n$ with $\chi_\lambda(1)=b_1(n)$, then $\lambda$ can be obtained from a self-conjugate partition by adding at most $d$ boxes?
\end{ques}

The reason that this question is relevant is that the methods and algorithms to generate large-degree characters, for example given in \cite{duzhinsmirnovmalsev2023}, are dependent on starting with something close to $b_1(n-1)$ and obtaining something close to $b_1(n)$ from it by adding a box. The behaviour of the function that sends $n$ to the number of boxes needed to be added to a self-conjugate partition to obtain partition with character degree $b_1(n)$ is that it is not smooth, and frequently jumps around, by ever-increasing amounts. This suggests that any form of `shaking algorithm'\footnote{Roughly speaking, a shaking algorithm involves finding a large-degree character of $S_n$, restricting it to $S_{n-k}$ for some fixed $k$, then inducing the factors of this back up to $S_n$ and looking for one with a larger degree. Combinatorially -- using the branching rule -- it involves finding all ways to remove $k$ boxes, add $k$ boxes back again, then determine the hook lengths of the resulting partition, hoping for something larger.} alluded to above (see, for example, \cite{duzhinsmirnovmalsev2023}) is doomed to fail to determine $b_1(n)$ efficiently in the long run.

On the other hand, $b_1(n)$ is achieved by a self-conjugate partition quite often -- for larger $n$, between $120$ and $150$ it is achieved for $n=121,125,127,129,134,136,142,144$. It is highly likely that $b_1(n)$ is realized by a self-conjugate partition infinitely often, but the author is not aware of any progress on that question. Finding a pattern to the $n$ here seems non-obvious.\footnote{I would like to thank one of the referees for suggesting these questions in this section.}

\begin{table}
\begin{center}\begin{tabular}{cc|cc|cc|cc}
\hline $n$ & $m(n)$ & $n$ & $m(n)$ &$n$ & $m(n)$ &$n$ & $m(n)$
\\ \hline
$2$&$2$&$34$&$18$&$66$&$34$&$98$&$104$
\\$3$&$2$&$35$&$30$&$67$&$54$&$99$&$96$
\\$4$&$2$&$36$&$14$&$68$&$38$&$100$&$104$
\\$5$&$2$&$37$&$20$&$69$&$56$&$101$&$110$
\\$6$&$4$&$38$&$26$&$70$&$50$&$102$&$106$
\\$7$&$4$&$39$&$16$&$71$&$48$&$103$&$112$
\\$8$&$2$&$40$&$20$&$72$&$44$&$104$&$102$
\\$9$&$3$&$41$&$22$&$73$&$50$&$105$&$146$
\\$10$&$2$&$42$&$20$&$74$&$44$&$106$&$104$
\\$11$&$4$&$43$&$26$&$75$&$58$&$107$&$120$
\\$12$&$4$&$44$&$25$&$76$&$46$&$108$&$114$
\\$13$&$6$&$45$&$24$&$77$&$60$&$109$&$132$
\\$14$&$6$&$46$&$24$&$78$&$48$&$110$&$126$
\\$15$&$6$&$47$&$32$&$79$&$58$&$111$&$136$
\\$16$&$4$&$48$&$16$&$80$&$64$&$112$&$126$
\\$17$&$8$&$49$&$32$&$81$&$72$&$113$&$130$
\\$18$&$6$&$50$&$30$&$82$&$56$&$114$&$108$
\\$19$&$10$&$51$&$26$&$83$&$58$&$115$&$144$
\\$20$&$6$&$52$&$24$&$84$&$66$&$116$&$120$
\\$21$&$8$&$53$&$32$&$85$&$86$&$117$&$172$
\\$22$&$8$&$54$&$32$&$86$&$68$&$118$&$114$
\\$23$&$12$&$55$&$40$&$87$&$86$&$119$&$142$
\\$24$&$8$&$56$&$32$&$88$&$78$&$120$&$158$
\\$25$&$12$&$57$&$34$&$89$&$94$&$121$&$160$
\\$26$&$12$&$58$&$32$&$90$&$102$&$122$&$140$
\\$27$&$10$&$59$&$32$&$91$&$84$&$123$&$176$
\\$28$&$12$&$60$&$34$&$92$&$84$&$124$&$178$
\\$29$&$22$&$61$&$44$&$93$&$94$&$125$&$188$
\\$30$&$14$&$62$&$30$&$94$&$80$&$126$&$194$
\\$31$&$12$&$63$&$44$&$95$&$104$&$127$&$164$
\\$32$&$12$&$64$&$36$&$96$&$96$&$128$&$174$
\\$33$&$16$&$65$&$52$&$97$&$104$&$129$&$224$

\\ \hline
\end{tabular}\end{center}
\caption{The function $m(n)$ for $2\leq n\leq 129$.}
\label{t:maxmult}
\end{table}

\begin{table}
\begin{center}\begin{tabular}{cc|cc|cc}
\hline $n$ & Number of pairs & $n$ & Number of pairs & $n$ & Number of pairs
\\ \hline
$15$&$1$&$51$&$311$&$87$&$23895$
\\$16$&$2$&$52$&$412$&$88$&$32343$
\\$17$&$0$&$53$&$380$&$89$&$31065$
\\$18$&$0$&$54$&$373$&$90$&$31942$
\\$19$&$1$&$55$&$500$&$91$&$37794$
\\$20$&$1$&$56$&$711$&$92$&$45633$
\\$21$&$5$&$57$&$786$&$93$&$52072$
\\$22$&$1$&$58$&$605$&$94$&$46924$
\\$23$&$0$&$59$&$695$&$95$&$54602$
\\$24$&$7$&$60$&$1331$&$96$&$78606$
\\$25$&$8$&$61$&$1241$&$97$&$77538$
\\$26$&$1$&$62$&$831$&$98$&$69055$
\\$27$&$5$&$63$&$1477$&$99$&$90644$
\\$28$&$13$&$64$&$2106$&$100$&$114893$
\\$29$&$6$&$65$&$1970$&$101$&$113891$
\\$30$&$10$&$66$&$1900$&$102$&$113139$
\\$31$&$10$&$67$&$2221$&$103$&$132908$
\\$32$&$17$&$68$&$2947$&$104$&$164782$
\\$33$&$30$&$69$&$3470$&$105$&$187349$
\\$34$&$11$&$70$&$3096$&$106$&$171432$
\\$35$&$25$&$71$&$3422$&$107$&$189781$
\\$36$&$57$&$72$&$5340$&$108$&$263020$
\\$37$&$33$&$73$&$5478$&$109$&$267378$
\\$38$&$21$&$74$&$4219$&$110$&$248100$
\\$39$&$49$&$75$&$6409$&$111$&$309322$
\\$40$&$93$&$76$&$8350$&$112$&$385150$
\\$41$&$64$&$77$&$7771$&$113$&$391224$
\\$42$&$60$&$78$&$8181$&$114$&$389427$
\\$43$&$93$&$79$&$9364$&$115$&$458881$
\\$44$&$108$&$80$&$12846$&$116$&$551870$
\\$45$&$172$&$81$&$14577$&$117$&$601380$
\\$46$&$111$&$82$&$11813$&$118$&$575444$
\\$47$&$102$&$83$&$14229$&$119$&$647147$
\\$48$&$293$&$84$&$21258$&$120$&$867144$
\\$49$&$284$&$85$&$21799$&$121$&$886962$
\\$50$&$172$&$86$&$18172$&$122$&$816004$
\\ \hline
\end{tabular}\end{center}
\caption{Number of pairs of partitions $\{\lambda,\lambda'\}$ of size $n$ with $a(\lambda)=a(\lambda')$, where $\lambda'$ is the conjugate of $\lambda$ (and $\lambda\neq\lambda'$).}\label{t:glnpartitionssamedegree}
\end{table}
\providecommand{\bysame}{\leavevmode\hbox to3em{\hrulefill}\thinspace}
\providecommand{\MR}{\relax\ifhmode\unskip\space\fi MR }
% \MRhref is called by the amsart/book/proc definition of \MR.
\providecommand{\MRhref}[2]{%
  \href{http://www.ams.org/mathscinet-getitem?mr=#1}{#2}
}
\providecommand{\href}[2]{#2}


\begin{thebibliography}{10}

\bibitem{magma}
Wieb Bosma, John Cannon, and Catherine Playoust, \emph{The {M}agma algebra
  system. {I}. {T}he user language}, J. Symbolic Comput. \textbf{24} (1997),
  235--265.

\bibitem{carterfinite}
Roger Carter, \emph{Finite groups of {L}ie type}, John Wiley \& Sons, New York,
  1985.

\bibitem{craven2008}
David~A.\ Craven, \emph{Symmetric group character degrees and hook numbers},
  Proc.\ Lond.\ Math.\ Soc. \textbf{96} (2008), 26--50.

\bibitem{craven2010a}
\bysame, \emph{Lower bounds for representation growth}, J. Group Theory
  \textbf{13} (2010), 873--890.

\bibitem{craven2012un}
\bysame, \emph{Perverse equivalences and {B}rou{\'e's} conjecture {II}: {T}he
  cyclic case}, preprint, 2012.

\bibitem{cravenrouquier2013}
David~A.\ Craven and Rapha{\"e}l Rouquier, \emph{Perverse equivalences and
  {B}rou{\'e}'s conjecture}, Adv. Math. \textbf{248} (2013), 1--58.

\bibitem{duzhinsmirnovmalsev2023}
Vasilii Duzhin and Egor Smirnov-Maltsev, \emph{On {Y}oung diagrams of maximum
  dimension}, Comm.~Algebra \textbf{31} (2023), 33--47.

\bibitem{gouldenrattan2007}
I. P. Goulden and A. Rattan, \emph{An explicit form for Kerov’s character polynomials}, Trans.\ Amer.\ Math.\ Soc.\ \textbf{359} (2007), 3669–3685.

\bibitem{hhn2016}
Zolt\'an Halasi, Carolin Hannusch, and {Hung Ngoc} Nguyen, \emph{The largest
  character degrees of the symmetric and alternating groups}, Proc. Amer. Math.
  Soc. \textbf{144} (2016), 1947--1960.

\bibitem{hermanchung1978}
Joan Herman and Fan Chung, \emph{Some results on hook lengths}, Discrete Math.
  \textbf{20} (1977/78), 33--40.

\bibitem{hissgroups}
Gerhard Hiss, \emph{Finite groups of Lie type and their representations}, in Groups St Andrews 2009 in Bath (C. M. Campbell, M. R. Quick, E. F. Robertson, C. M. Roney-Dougal, G. C. Smith, and
G. Traustason, eds.), London Mathematical Society Lecture Note Series, vol. 387, Cambridge University Press, 2009.

\bibitem{jaikin2005}
Andrei Jaikin-Zapirain, \emph{On two conditions on characters and conjugacy
  classes in finite soluble groups}, J.\ Group Theory \textbf{8} (2005),
  267--272.

\bibitem{james}
Gordon James, \emph{The representation theory of the symmetric groups}, Lecture
  Notes in Mathematics, vol. 682, Springer-Verlag, Berlin, 1978.

\bibitem{jameskerber}
Gordon James and Adalbert Kerber, \emph{The representation theory of the
  symmetric group}, Encyclopedia of Mathematics and its Applications,
  Addison-Wesley Publishing Co., Reading, MA, 1981.

\bibitem{liebeckshalev2005}
Martin Liebeck and Aner Shalev, \emph{Character degrees and random walks in
  finite groups of {L}ie type}, Proc.\ London Math.\ Soc. \textbf{90} (2005),
  61--86.

\bibitem{loganshepp1977}
Benjamin Logan and Lawrence Shepp, \emph{A variational problem for random
  {Y}oung tableaux}, Adv. Math. \textbf{26} (1977), 206--222.

\bibitem{martinezmadrid2024un}
Juan~Mart{\'i}nez Madrid, \emph{The {B}aby {M}onster is the largest group with
  at most 2 irreducible characters with the same degree}, Bull.\ Lond.\ Math.\ Soc., to appear.

\bibitem{mckay1976}
John McKay, \emph{The largest degrees of irreducible characters of the
  symmetric group}, Math. Comp. \textbf{30} (1976), 624--631.

\bibitem{miller2024un}
Alexander R. Miller, \emph{Character and class parameters from entries of character tables of symmetric groups}, preprint, 2024.
\bibitem{moreto2007}
Alexander Moret{\'o}, \emph{Complex group algebras of finite groups: {B}rauer's
  problem 1}, Adv.\ Math. \textbf{208} (2007), 236--248.

\bibitem{moreto2023}
\bysame, \emph{Multiplicities of character codegrees of finite groups}, Bull.\
  Lond.\ Math.\ Soc. \textbf{55} (2023), 234--241.

\bibitem{nguyen2011}
{Hung Ngoc} Nguyen, \emph{Multiplicities of conjugacy class sizes of finite
  groups}, J. Algebra \textbf{341} (2011), 250--255.

\bibitem{sagan}
Bruce Sagan, \emph{The symmetric group}, second ed., Graduate Texts in
  Mathematics, vol. 203, Springer-Verlag, New York, 2001.

\bibitem{vershikkerov1977}
Anatoly Vershik and Sergei Kerov, \emph{Asymptotics of the Plancherel measure
  of the symmetric group and the limit form of Young tableaux}, Soviet Math.
  Dokl. \textbf{18} (1977), 527--531.

\bibitem{vershikkerov1985}
\bysame, \emph{Asymptotic behavior of the maximum and generic dimensions of
  irreducible representations of the symmetric group}, Funktsional. Anal. i
  Prilozhen \textbf{19} (1985), 25--36 (Russian).

\bibitem{vershikpavlov2010}
Anatoly Vershik and Dmitry Pavlov, \emph{Numerical experiments in problems of
  asymptotic representation theory}, J. Math. Sci. \textbf{168} (2010),
  351--361.

\bibitem{wildon2008}
Mark Wildon, \emph{Character values and decomposition matrices of symmetric groups}, J.\ Algebra. \textbf{319} (2008), 3382--3397.

\end{thebibliography}
\end{document}